\theoremstyle{definition}
\newtheorem{thm}{Theorem}[section]
\newtheorem{dfn}[thm]{Definition}
\newtheorem{lem}[thm]{Lemma}
\newtheorem{rem}[thm]{Remark}
   \newcommand{\Cdb}{\mbox{$\mathbb{C}$}}
   \newcommand{\Ndb}{\mbox{$\mathbb{N}$}}
   \newcommand{\Rdb}{\mbox{$\mathbb{R}$}}
   \newcommand{\Tdb}{\mbox{$\mathbb{T}$}}
    \newcommand{\M}{\mbox{${\mathcal M}$}}
\newcommand{\norm}[1]{\Vert#1\Vert}
\newcommand{\bignorm}[1]{\bigl\Vert#1\bigr\Vert}
\newcommand{\Bignorm}[1]{\Bigl\Vert#1\Bigr\Vert}
\author[L. Arnold]{Loris Arnold}
\email{larnold@impan.pl}
\address{Institute of Mathematics, Polish Academy of Sciences, Śniadeckich 8, Warszawa, Poland}
\author[C. Le Merdy]{Christian Le Merdy}
\email{clemerdy@univ-fcomte.fr}
\address{Universit\'e de Franche-Comt\'e, Laboratoire de Math\'ematiques de Besan\c con, UMR CNRS 6623,
16 Route de Gray, 25000 Besan\c{c}on, France}
\author[S. Zadeh]{Safoura Zadeh}
\email{jsafoora@gmail.com}
\address{Laboratoire de Math\'ematiques Blaise Pascal, UMR 6620, Universit\'e Clermont Auvergne, France}
\address{School of Mathematics, University of Bristol,
Bristol BS8 1UG, United Kingdom}
\address{Institut des Hautes \'Etudes Scientifiques,
Bures-sur-Yvette 91440, France}
\title[Hankel operators on $L^p(\Rdb_+)$]{Hankel 
operators on $L^p(\Rdb_+)$ and their $p$-completely bounded
multipliers}
\begin{document}
\maketitle

\begin{abstract}
We show that for any $1<p<\infty$, the space $Hank_p(\mathbb{R}_+)\subseteq B(L^p(\Rdb_+))$ 
of all Hankel operators on $L^p(\Rdb_+)$ is equal to the 
$w^*$-closure of the linear span of the operators
$\theta_u\colon L^p(\Rdb_+)\to L^p(\Rdb_+)$ defined by 
$\theta_uf=f(u-\,\cdotp)$, for $u>0$. We deduce that 
$Hank_p(\mathbb{R}_+)$ is the dual space of
$A_p(\Rdb_+)$, a half-line analogue of the 
Figa-Talamenca-Herz algebra
$A_p(\Rdb)$. Then we show that 
a function $m\colon \Rdb_+^*\to\Cdb$ is the 
symbol of a $p$-completely bounded
multiplier $Hank_p(\mathbb{R}_+)\to Hank_p(\mathbb{R}_+)$
if and only if there exist 
$\alpha\in L^\infty(\Rdb_+;L^p(\Omega))$ and
$\beta\in L^\infty(\Rdb_+;L^{p'}(\Omega))$ such that
$m(s+t)=\langle\alpha(s),\beta(t)\rangle$
for a.e. $(s,t)\in\Rdb_+^{*2}$. We also 
give analogues of these results in the (easier) discrete case.
\end{abstract}

\section{Introduction} \label{sec:introduction}
For any $u>0$ and for any function $f\colon \Rdb_+\to\Cdb$,
let $\tau_uf\colon \Rdb_+\to\Cdb$ be the shifted 
function defined by 
$\tau_u f =f(\cdotp -u)$. Let $1<p,p'<\infty$ be two 
conjugate indices. We say that 
a bounded operator $T\colon L^p(\Rdb_+)\to L^p(\Rdb_+)$ 
is Hankelian
if $\langle T\tau_u f,g\rangle= \langle T f,\tau_u g\rangle$
for all $f\in L^p(\Rdb_+)$ and $g\in L^{p'}(\Rdb_+)$.
Let $B(L^p(\Rdb_+))$ denote the Banach space of all bounded
operators on $L^p(\Rdb_+)$.
The main object of this paper is the subspace
$Hank_p(\mathbb{R}_+)\subseteq B(L^p(\Rdb_+))$ 
of all Hankel operators on $L^p(\Rdb_+)$.

The case $p=2$ has received a lot of attention, see
\cite{Ni0, Ni, Peller,Y0,Y1,Y2} and the references therein.
The most important result in this case
is that $Hank_2(\mathbb{R}_+)$ 
is isometrically
isomorphic to the quotient space 
$\frac{L^\infty({\mathbb R})}{H^\infty({\mathbb R})}$, where 
$H^\infty(\Rdb)\subset L^\infty(\Rdb)$ is the classical
Hardy space of essentially bounded functions whose Fourier 
transform has support in $\Rdb_+$
(see \cite[Section IV.5.3]{Ni} or \cite[Theorem I.8.1]{Peller}). 
This result  is the real line analogue of Nehari's classical
theorem describing Hankel operators on $\ell^2$ 
(see \cite[Theorem II.2.2.4]{Ni}, \cite[Theorem I.1.1]{Peller}
or \cite[Theorem 1.3]{Po}). An equivalent formulation of the
above result
is that 
\begin{equation}\label{H1}
Hank_2(\mathbb{R}_+)\simeq H^1(\Rdb)^*, 
\end{equation}
where
$H^1(\Rdb)\subseteq L^1(\Rdb)$ is the Hardy space
of all integrable functions whose Fourier transform vanishes on
$\Rdb_-$.

The first main result of this paper
is that for any $1<p<\infty$,
the Banach space $Hank_p(\mathbb{R}_+)$ coincides with
$\overline{{\rm Span}}^{w^*}\{\theta_u\, :\,
u>0\}\subset B(L^p(\Rdb_+))$
where, for any $u>0$, $\theta_u\colon L^p(\Rdb_+)\to L^p(\Rdb_+)$
is the Hankel operator
defined by $\theta_u f = f(u-\,\cdotp)$. 
As a consequence, we show that 
\begin{equation}\label{Dual}
Hank_p(\mathbb{R}_+)\simeq A_p(\Rdb_+)^*, 
\end{equation}
where
$A_p(\Rdb_+)$ is a half-line analogue of the 
Figa-Talamenca-Herz algebra
$A_p(\Rdb)$ (see e.g. \cite[Chapter 3]{Der}). 
We will see in Remark \ref{ReHank} (a) that 
$A_2(\Rdb_+)\simeq H^1(\Rdb)$. Thus, the
duality result (\ref{Dual}), established in 
Theorem \ref{dual space}, is an $L^p$-version of (\ref{H1}).

By a multiplier of $Hank_p(\mathbb{R}_+)$, we mean 
a $w^*$-continuous operator 
$T\colon Hank_p(\mathbb{R}_+)\to Hank_p(\mathbb{R}_+)$ 
such that $T(\theta_u)=m(u)\theta_u$ for all $u>0$,
for some function $m\colon\Rdb_+^*\to\Cdb$. In this case,
we set
$T=T_m$  and it turns out
that $m$ is necessarily bounded and continuous, see Lemma 
\ref{Continuous}.
The second main result of this paper 
is a characterization of 
of $p$-completely bounded multipliers $T_m$. We refer to 
Section 2 for some background on $p$-complete boundedness, 
whose definition goes back to \cite{Pisier90} (see
also \cite{Daws,Le Merdy,Pis}). We prove in Theorem \ref{last} that
$T_m\colon Hank_p(\mathbb{R}_+)\to Hank_p(\mathbb{R}_+)$ 
is a $p$-completely bounded multiplier
if and only if there exist a measure space
$(\Omega,\mu)$ and two essentially bounded measurable
functions $\alpha\colon\Rdb_+\to L^p(\Omega)$ and
$\beta\colon\Rdb_+\to L^{p'}(\Omega)$ such that
$m(s+t)=\langle\alpha(s),\beta(t)\rangle$ for almost
every $(s,t)\in\Rdb_+^{*2}$. This is a generalisation
of \cite[Theorem 3.1]{ALZ}. Indeed, the result in \cite{ALZ}
provides a characterization of $S^1$-bounded
multipliers on $H^1(\Rdb)$. Using (\ref{H1}), this yields
a characterization of completely bounded multipliers
on $Hank_2(\mathbb{R}_+)$, which is nothing but the case $p=2$ of 
Theorem \ref{last}. See Remark
\ref{p=2} for more on this.

Let us briefly explain the plan
of the paper. Section 2 contains some preliminary results. 
Section 3 is devoted to 
$Hank_p(\mathbb{N})\subset B(\ell^p)$, the space of Hankel
operators on $\ell^p$. 
We establish analogues of the aforementioned
results in the discrete setting. Results for $Hank_p(\mathbb{N})$
are easier than those concerning $Hank_p(\mathbb{R}_+)$
and Section 3 can be considered as a warm up. The main
results are stated and proved in Section 4.

\section{Preliminaries} \label{sec:preleminaries}
All our Banach spaces are complex ones. For any Banach spaces
$X,Z$, we let $B(X,Z)$ denote the Banach space of all bounded operators 
from $X$ into $Z$ and we write $B(X)$ instead of $B(X,X)$ when $Z=X$.
For any $x\in X$ and $x^*\in X^*$, the duality
action $x^*(x)$ is denoted by
$\langle x^*,x\rangle_{X^*,X}$, or simply by
$\langle x^*,x\rangle$ if there is no risk of confusion.

We start with duality on tensor products.
Let $X,Y$ be Banach spaces. Let 
$X \widehat{\otimes} Y$ denote their projective tensor product
\cite[Section VIII.1]{DU}. We will use the 
classical isometric identification
\begin{equation}\label{Duality} 
(X \widehat{\otimes} Y)^*\simeq B(X,Y^*) 
\end{equation}
provided e.g. by \cite[Corollary VIII.2.2]{DU}. More precisely, 
for any $\xi\in (X \widehat{\otimes} Y)^*$, there exists a
necessarily unique $R_\xi\in B(X,Y^*)$ such that 
$\xi(x\otimes y)=\langle R_\xi(x),y\rangle$ for all
$x\in X$ and $y\in Y$. Moreover $\norm{R_\xi}=\norm{\xi}$ and
the mapping $\xi\mapsto R_\xi$ is onto.

\begin{lem}\label{approx}
Let $A\subset X$ and $B\subset Y$ such that 
${\rm Span}\{A\}$ is dense in $X$ and 
${\rm Span}\{B\}$ is dense in $Y$. Assume that
$(R_\iota)_\iota$ is a bounded net of 
$B(X,Y^*)$. Then $R_\iota$ converges to some
$R\in B(X,Y^*)$ in the $w^*$-topology if and
only if $\langle R_\iota(x),y\rangle
\to \langle R(x),y\rangle$, for all 
$x\in A$ and $y\in B$.
\end{lem}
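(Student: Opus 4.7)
My plan is to translate the statement into one about the dual pairing $(X\widehat{\otimes} Y)^*\simeq B(X,Y^*)$ recalled in \eqref{Duality}, and then run a standard density/boundedness argument.

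First I would set up the translation. Under the identification \eqref{Duality}, the weak-$*$ topology of $B(X,Y^*)$ is by definition the one for which $R_\iota\to R$ if and only if $\xi_{R_\iota}(z)\to\xi_R(z)$ for every $z\in X\widehat{\otimes} Y$, where $\xi_R\in(X\widehat{\otimes} Y)^*$ denotes the functional associated with $R$. On an elementary tensor, $\xi_R(x\otimes y)=\langle R(x),y\rangle$. So the forward implication is immediate, since for $x\in A\subseteq X$ and $y\in B\subseteq Y$ the element $x\otimes y$ belongs to $X\widehat{\otimes} Y$.

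For the reverse implication, assume $\langle R_\iota(x),y\rangle\to\langle R(x),y\rangle$ for all $x\in A$ and $y\in B$. By linearity in each variable, this extends to all $x\in E:={\rm Span}(A)$ and $y\in F:={\rm Span}(B)$. Hence $\xi_{R_\iota}(z)\to\xi_R(z)$ for every $z$ in the algebraic tensor product $E\otimes F$. The key point is that $E\otimes F$ is dense in $X\widehat{\otimes} Y$: this follows from the density of $X\otimes Y$ in $X\widehat{\otimes} Y$ and the fact that an elementary tensor $x\otimes y$ can be approximated in projective norm by $x_n\otimes y_n$ with $x_n\in E$, $y_n\in F$, since $\|x\otimes y-x_n\otimes y_n\|_{\wedge}\le \|x-x_n\|\,\|y\|+\|x_n\|\,\|y-y_n\|$.

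Now combine density with boundedness. Set $M=\sup_\iota\|R_\iota\|<\infty$ and note $\|R\|\le M$ as well (or simply use that the hypothesis only mentions the net, and derive $\|\xi_R\|\le M$ from the pointwise convergence). For $z\in X\widehat{\otimes} Y$ and $\varepsilon>0$, pick $z'\in E\otimes F$ with $\|z-z'\|_{\wedge}<\varepsilon$. Then
\[
|\xi_{R_\iota}(z)-\xi_R(z)|\le |\xi_{R_\iota}(z-z')|+|\xi_{R_\iota}(z')-\xi_R(z')|+|\xi_R(z'-z)|\le 2M\varepsilon+|\xi_{R_\iota}(z')-\xi_R(z')|,
\]
and the middle term tends to $0$ by the previous step. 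Hence $\xi_{R_\iota}\to\xi_R$ pointwise on $X\widehat{\otimes} Y$, i.e., $R_\iota\to R$ in the weak-$*$ topology of $B(X,Y^*)$.

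The only minor obstacle is the density of $E\otimes F$ in $X\widehat{\otimes} Y$, but this is a routine projective-norm estimate as indicated above; everything else is bookkeeping in the dual pairing.
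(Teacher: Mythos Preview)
Your proof is correct and follows the same approach as the paper: reduce to pointwise convergence of the associated functionals on $X\widehat{\otimes}Y$, use linearity and density of ${\rm Span}(A)\otimes{\rm Span}(B)$ to get convergence on a dense subspace, and then invoke boundedness of the net to pass to all of $X\widehat{\otimes}Y$. The paper's proof is simply a terser version of what you wrote; the only cosmetic point is that in your $2M\varepsilon$ estimate you should use $M+\|R\|$ (or just note $\|R\|<\infty$ since $R\in B(X,Y^*)$ is given), as the bound $\|R\|\le M$ is not yet established at that stage.
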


\begin{proof}
Assume the latter property. Since
the algebraic tensor
product $X\otimes Y$ is dense in $X \widehat{\otimes} Y$,
it implies that $\langle R_\iota,z\rangle
\to \langle R ,z\rangle$, for all $z$ belonging
to a dense subspace of $X \widehat{\otimes} Y$. Next, 
the boundedness of $(R_\iota)_\iota$ implies that 
$\langle R_\iota,z\rangle
\to \langle R ,z\rangle$, for all $z$ belonging to 
$X \widehat{\otimes} Y$. The equivalence follows.
\end{proof}

We will use the above duality principles in the case when 
$X=Y^*$ is an $L^p$-space $L^p(\Omega)$, for some 
index  $1<p<\infty$.

We now give a brief background on $p$-completely bounded maps, following \cite{Pisier90} (see also
\cite{Daws, Le Merdy, Pis}). Let $1<p<\infty$ and let 
$SQ_p$ denote the collection of quotients of subspaces of $L^p$-spaces, where we identify spaces which are isometrically isomorphic. Let $E$ be an $SQ_p$-space. Let 
$n\geq 1$ be an integer and let $[T_{ij}]_{1\leq i,j\leq n}
\in M_n\otimes B(E)$ be an $n\times n$ matrix with entries
$T_{ij}$ in $B(E)$. We equip $M_n\otimes B(E)$ with the
norm defined by
\begin{equation}\label{pNorm}
\bignorm{[T_{ij}]} = \sup\Bigl\{\Bigl(\sum_{i=1}^n \Bignorm{\sum_{j=1}^{n}
T_{ij}(x_j)}^p\Bigr)^{\frac{1}{p}}\, :\, x_1,\ldots,x_n\in E,\ \sum_{i=1}^n \norm{x_i}^p\leq 1\Bigr\}.
\end{equation}
If $S\subset B(E)$ is any subspace, then we let
$M_n(S)$ denote 
$M_n\otimes S$ equipped with the induced norm.

Let $S_1$ and $S_2$ be subspaces of $B(E_1)$ and $B(E_2)$, respectively, for
some $SQ_p$-spaces $E_1$ and $E_2$. 
Let $w\colon S_1\to S_2$ be a linear map. 
For any $n\geq 1$, let $w_n\colon M_n(S_1)\to M_n(S_2)$ be defined
by $w_n\bigl([T_{ij}]\bigr)= [w(T_{ij})]$,
for any $[T_{ij}]_{1\leq i,j\leq n}\in M_n(S_1)$.
By definition, $w$
is called $p$-completely bounded if 
the maps $w_n$ are uniformly bounded. In this case, 
the $p$-$cb$  norm of $w$ is defined by
$\|w\|_{p-cb}=\sup_n\|w_n\|$. We further say that $w$ is $p$-completely contractive if $\|w\|_{p-cb}\leq 1$
and that $w$ is a $p$-complete isometry if $w_n$ is an isometry
for all $n\geq 1$.
Note that the case $p=2$ corresponds to the classical notion of completely bounded maps (see e.g. \cite{Paulsen, Pis}).

We recall the following factorisation theorem of Pisier 
(see \cite[Theorem 1.4]{Le Merdy} and \cite[Theorem 2.1]{Pisier90}),
which extends Wittstock's factorization theorem \cite[Theorem 8.4]{Paulsen}.

\begin{thm}\label{PW}
Let $(\Omega_1,\mu_1)$ and 
$(\Omega_2,\mu_2)$ be measure spaces and 
let $1<p<\infty$. Let $S\subseteq B(L^p(\Omega_1))$
be a unital subalgebra. Let $w\colon S\to B(L^p(\Omega_2))$ be a 
linear map and let $C\geq 0$ be a constant.
The following assertions are equivalent.
\begin{itemize}
\item [(i)] The map $w$ is
$p$-completely bounded and $\|w\|_{p-cb}\leq C$. 
\item [(ii)] 
There exist an $SQ_p$-space $E$, a unital, non degenerate
$p$-completely contractive homomorphism $\pi:S\to B(E)$ 
as well as operators $V\colon L^p(\Omega_2)\to E$ and 
$W\colon E\to L^{p}(\Omega_2)$ such that $\|V\|\|W\|\leq C$ 
and for any $x\in S$, $w(x)=W\pi(x) V$.
\end{itemize}
\end{thm}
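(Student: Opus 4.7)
The plan is to handle the two directions separately. For (ii)$\Rightarrow$(i), I would first observe that at the matrix level one has $w_n([T_{ij}])=W_n[\pi(T_{ij})]V_n$, where $V_n$ and $W_n$ act diagonally on $L^p(\Omega_2;\ell^p_n)$ with $\|V_n\|=\|V\|$ and $\|W_n\|=\|W\|$. Combined with the fact that $\pi$ is $p$-completely contractive, this yields $\|w_n\|\leq\|V\|\|W\|\leq C$ uniformly in $n$, which is exactly what is needed.

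The substantive direction (i)$\Rightarrow$(ii) requires building the data $(E,\pi,V,W)$ by a Stinespring-type construction adapted to $SQ_p$-spaces. First, I would form the algebraic tensor product $S\otimes L^p(\Omega_2)$ and equip it with a seminorm $\rho$ defined as a supremum of expressions of the form $\bignorm{\sum_j w(y_ix_j)(\xi_j)}_{L^p(\Omega_2)}$, the supremum being taken over tuples $(y_i)$ in $S$ calibrated via the matrix norm (\ref{pNorm}). The $p$-completely bounded hypothesis on $w$ is exactly what ensures that $\rho$ is finite and dominated by a multiple of $C$ times the natural norm. The completion $E$ of the quotient by the null space is the candidate $SQ_p$-space, and the left regular action $\pi(x)(y\otimes\xi)=xy\otimes\xi$ is the candidate representation; unitality of $S$ makes $\pi$ unital, and the construction of $\rho$ directly forces $\pi$ to be $p$-completely contractive. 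The map $V\colon L^p(\Omega_2)\to E$ is defined by $V(\xi)=1\otimes\xi$, while $W$ is extracted via the duality (\ref{Duality}), applied to the linear functional on $E$ arising from $w$ evaluated at a distinguished cyclic vector.

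The main obstacle is verifying that $E$ genuinely belongs to the class $SQ_p$ and that $\pi$ is a bona fide homomorphism of $p$-cb norm at most one (rather than merely a $p$-cb map). Both rest on the theory of $p$-operator spaces from \cite{Pisier90}: the identification of $E$ as a quotient of a subspace of an $L^p$-direct sum follows from rewriting $\rho$ in terms of the matrix norm (\ref{pNorm}) and using duality with $L^{p'}$, while the homomorphism property of $\pi$ reduces to the compatibility of this matrix norm with composition inside $S$. Non-degeneracy of $\pi$ is then automatic after cutting down by the projection onto its essential range, since $\pi(1)=\mathrm{id}_E$ by construction. In the case $p=2$, this scheme collapses, after a Hahn--Banach--Arveson extension to a completely positive map, to the classical Stinespring dilation used in Wittstock's theorem \cite[Theorem 8.4]{Paulsen}; the main content of Theorem \ref{PW} is that the same scheme works for every $1<p<\infty$ provided one works throughout in the category $SQ_p$.
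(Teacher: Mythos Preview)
The paper does not prove Theorem \ref{PW}: it is stated there as a known result, with citations to \cite[Theorem 2.1]{Pisier90} and \cite[Theorem 1.4]{Le Merdy}, and used as a black box throughout Sections 3 and 4. So there is no ``paper's own proof'' to compare against; your proposal is an attempt to supply what the paper deliberately omits.

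As to the proposal itself: the direction (ii)$\Rightarrow$(i) is correctly handled and is indeed the ``easy implication'' invoked several times in the paper. Your outline for (i)$\Rightarrow$(ii) follows the right architecture --- a Stinespring-type dilation on $S\otimes L^p(\Omega_2)$, with $\pi$ acting by left multiplication and $V(\xi)=1\otimes\xi$ --- and this is the strategy carried out in \cite{Pisier90,Le Merdy}. However, two steps in your sketch are underspecified to the point of being gaps. First, your description of the seminorm $\rho$ (a supremum over tuples $(y_i)$ ``calibrated via the matrix norm'') is too vague: in Pisier's argument the precise form of this seminorm is what simultaneously forces $E$ into $SQ_p$ and makes $\pi$ $p$-completely contractive, and getting it right is the crux of the proof. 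Second, your definition of $W$ (``extracted via the duality \eqref{Duality}, applied to the linear functional on $E$ arising from $w$ evaluated at a distinguished cyclic vector'') does not make sense as stated --- $W$ must be an operator $E\to L^p(\Omega_2)$, not a functional, and the natural candidate is the map sending the class of $\sum_j x_j\otimes\xi_j$ to $\sum_j w(x_j)\xi_j$, whose boundedness is again a consequence of the correct choice of $\rho$. These are not fatal to the overall plan, but as written the proposal does not yet constitute a proof; it is a correct table of contents for one.
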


\begin{rem}\label{Duality-SQp}
Let $1<p<\infty$ and let $p'$ be its conjugate index. 
Let $E$ be an $SQ_p$-space. Then by assumption, there
exist a measure space $(\Omega,\mu)$ and two closed subspaces
$E_2\subseteq E_1\subseteq L^p(\Omega)$ such 
that $E=\frac{E_1}{E_2}$. Then 
$E_1^\perp\subseteq E_2^\perp\subseteq 
L^{p'}(\Omega)$ and we have an isometric 
identification
\begin{equation}\label{Dual-SQp}
E^*\simeq \frac{E_2^\perp}{E_1^\perp},
\end{equation}
by the classical duality between subspaces and 
quotients of Banach spaces.
More explicitly, let $f\in E_1$ and let $g\in E_2^\perp$. Let 
$\dot{f}\in E$ denote the class of $f$ modulo $E_2$
and let $\dot{g}\in E^*$ denote the element associated
to the class of $g$ modulo $E_1^\perp$ through the 
identification (\ref{Dual-SQp}).
Then we have
\begin{equation}\label{Lifting}
\langle\dot{g},\dot{f}\rangle_{E^*,E}\, =\, \langle g,f\rangle_{L^{p'},L^p}.
\end{equation}
\end{rem}

We now turn to Bochner spaces.
Let $(\Sigma,\nu)$ be a measure space and
let $X$ be a Banach space. For any $1\leq p\leq\infty$,
we let $L^p(\Sigma;X)$ denote the space of 
all measurable functions $\phi\colon\Sigma\to X$
(defined up to almost everywhere zero functions)
such that the norm function $t\mapsto \norm{\phi(t)}$
 belongs to $L^p(\Sigma)$. This 
 is a Banach space for the norm $\norm{\phi}_p$,
 defined as the 
 $L^p(\Sigma)$-norm of $\norm{\phi(\cdotp)}$ 
 (see e.g. \cite[Chapters I and II]{DU}).

 Assume that $p$ is finite and note that in this
 case, $L^p(\Sigma)\otimes X$ is dense
 in $L^p(\Sigma;X)$. Let $p'$ be the conjugate 
index of $p$. 
For all $\phi\in L^p(\Sigma;X)$ and $\psi\in L^{p'}(\Sigma;X^*)$,
the function $t\mapsto\langle\psi(t),\phi(t)\rangle_{X^*,X}$ belongs to
$L^1(\Sigma)$ and the resulting duality paring 
$\langle\psi,\phi\rangle := \int_\Omega\langle\psi(t),\phi(t)\rangle\, d\nu(t)$ extends to an isometric embedding 
$L^{p'}(\Sigma;X^*)\hookrightarrow L^p(\Sigma;X)^*$. Furthermore,
this embedding is onto if $X$ is reflexive, that is,
\begin{equation}\label{DualBochner}
L^{p'}(\Sigma;X^*)\simeq L^p(\Sigma;X)^*
\qquad\hbox{if}\ X\ \hbox{is reflexive}.
\end{equation}
We refer to \cite[Corollary III.2.13 $\&$ Section IV.1]{DU} for these results and complements.

Let $(\Sigma,\nu)$ and $(\Omega,\mu)$ be two measure spaces.
Then we have an isometric identification
$$
L^p(\Sigma;L^p(\Omega)) \simeq L^p(\Sigma\times\Omega),
$$
from which it follows that for any $T\in B(L^p(\Sigma))$,
the tensor extension $T\otimes I_{L^p(\Omega)}\colon
L^p(\Sigma)\otimes L^p(\Omega)\to L^p(\Sigma)
\otimes L^p(\Omega)$ 
extends to a bounded 
operator $T\overline{\otimes} I_{L^p(\Omega)}$ on
$L^p(\Sigma\times\Omega)$, whose norm is equal to the norm of $T$.
The following is elementary.

\begin{lem}\label{Tensor-Ext} 
The mapping $\pi\colon B(L^p(\Sigma))\to B(L^p(\Sigma\times\Omega))$
defined by $\pi(T)= T\overline{\otimes} I_{L^p(\Omega)}$
is a $p$-complete isometry.
\end{lem}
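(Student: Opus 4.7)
The plan is to verify the norm equality $\bignorm{\pi_n([T_{ij}])} = \bignorm{[T_{ij}]}$ for every $n\geq 1$ and every matrix $[T_{ij}]\in M_n(B(L^p(\Sigma)))$, using the $p$-matrix norm from (\ref{pNorm}). The central tool will be the Fubini-type identification $L^p(\Sigma\times\Omega)\simeq L^p(\Omega;L^p(\Sigma))$, which lets us reduce norms on $L^p(\Sigma\times\Omega)$ to fibrewise norms on $L^p(\Sigma)$.

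For the inequality $\bignorm{[T_{ij}]}\leq\bignorm{\pi_n([T_{ij}])}$, I would fix any norm-one element $e\in L^p(\Omega)$ and, given $x_1,\dots,x_n\in L^p(\Sigma)$ with $\sum_j\norm{x_j}^p\leq 1$, test $\pi_n([T_{ij}])$ on the elementary tensors $f_j=x_j\otimes e\in L^p(\Sigma\times\Omega)$. Since $(T_{ij}\overline{\otimes}I)(x_j\otimes e)=T_{ij}(x_j)\otimes e$ and $\norm{e}=1$, the two $p$-sums coincide and supremising gives the claimed bound.

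For the reverse inequality $\bignorm{\pi_n([T_{ij}])}\leq\bignorm{[T_{ij}]}$, I would take arbitrary $f_1,\dots,f_n\in L^p(\Sigma\times\Omega)$ with $\sum_j\norm{f_j}^p\leq 1$ and, via Fubini, write
\[
\sum_{i=1}^n\Bignorm{\sum_{j=1}^n (T_{ij}\overline{\otimes}I)(f_j)}_{L^p(\Sigma\times\Omega)}^p
= \int_\Omega\sum_{i=1}^n\Bignorm{\sum_{j=1}^n T_{ij}(f_j(\,\cdotp,\omega))}_{L^p(\Sigma)}^p\,d\mu(\omega).
\]
Applying the definition (\ref{pNorm}) pointwise in $\omega$ to the tuple $(f_1(\,\cdotp,\omega),\dots,f_n(\,\cdotp,\omega))\in L^p(\Sigma)^n$ yields the integrand bound $\bignorm{[T_{ij}]}^p\sum_j\norm{f_j(\,\cdotp,\omega)}^p$, and integrating over $\Omega$ gives $\bignorm{[T_{ij}]}^p\sum_j\norm{f_j}^p\leq\bignorm{[T_{ij}]}^p$.

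There is no serious obstacle here; the only mild care is that the functions $\omega\mapsto f_j(\,\cdotp,\omega)$ must be interpreted as elements of $L^p(\Omega;L^p(\Sigma))$ under the Fubini identification, and the $n\times n$ block measurability needed to apply (\ref{pNorm}) pointwise is automatic for finite matrices. The argument is essentially the standard proof that tensoring with the identity on an $L^p$-factor is an isometry at the matrix level, adapted to the $p$-matrix norm from the $SQ_p$ setting of Section~\ref{sec:preleminaries}.
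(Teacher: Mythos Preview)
Your argument is correct, but the paper takes a shorter, more conceptual route. Rather than running the two inequalities directly at the matrix level, the paper first observes that the $p$-matrix norm (\ref{pNorm}) is precisely the operator norm on $\ell^p_n(L^p(\Sigma))\simeq L^p(J_n\times\Sigma)$, so that $M_n(B(L^p(\Sigma)))=B(L^p(J_n\times\Sigma))$ isometrically, and likewise with $\Sigma$ replaced by $\Sigma\times\Omega$. Under these identifications one has $[T_{ij}\overline{\otimes}I_{L^p(\Omega)}]=[T_{ij}]\overline{\otimes}I_{L^p(\Omega)}$, so the matrix-level statement reduces to the already-recorded fact (stated just before the lemma) that $\norm{S\overline{\otimes}I_{L^p(\Omega)}}=\norm{S}$ for any single operator $S\in B(L^p(J_n\times\Sigma))$.

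In effect, your Fubini/slicing computation is exactly the proof of that scalar fact, carried out with the extra index $i\in J_n$ left explicit; the paper absorbs that index into the measure space first and then quotes the $n=1$ case. Your approach buys self-containment (you never invoke the preliminary norm equality), while the paper's buys brevity and makes transparent that nothing new happens at the matrix level.
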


\begin{proof}
Let $n\geq 1$ and let $J_n=\{1,\ldots,n\}$. It follows from (\ref{pNorm}) that $M_n(B(L^p(\Sigma)))=B(\ell^p_n(L^p(\Sigma)))$
and hence $M_n(B(L^p(\Sigma)))
=B(L^p(J_n\times\Sigma))$ isometrically. Likewise, we have
$M_n(B(L^p(\Sigma\times\Omega)))
=B(L^p(J_n\times\Sigma\times\Omega))$ isometrically. 
Through these
identifications, 
$$
\Bigl[T_{ij}\overline{\otimes} 
I_{L^p(\Omega)}\Bigr] = 
[T_{ij}]\overline{\otimes} I_{L^p(\Omega)},
$$
for all $[T_{ij}]_{1\leq i,j\leq n}$ in $M_n(B(L^p(\Sigma)))$.
The result follows at once.
\end{proof}

We finally state an important result concerning 
Schur products on $B(\ell^p_I)$-spaces.
Let $I$ be an index set, let $1<p<\infty$
and let $\ell^p_I$ denote the discrete $L^p$-space over $I$.
Let $(e_t)_{t\in I}$ be its canonical basis. 
To any $T\in B(\ell^p_I)$, we associate 
a matrix of complex numbers, $[a_{st}]_{s,t\in I}$, defined by
$a_{st}=\langle T(e_t),e_s\rangle$, for all $s,t\in I$.
Following \cite[Chapter 5]{Pis}, we say that 
a bounded family
$\{\varphi(s,t)\}_{(s,t)\in I^2}$ 
of complex numbers is a bounded Schur multiplier
on $B(\ell^p_I)$ if for all $T\in B(\ell^p_I)$, with
matrix $[a_{st}]_{s,t\in I}$, the matrix $[\varphi(s,t)
a_{st}]_{s,t\in I}$ represents an element of $B(\ell^p_I)$.
In this case, the mapping $[a_{st}]\to [\varphi(s,t)
a_{st}]$ is a bounded operator from $B(\ell^p_I)$ into
itself. We note that $\{\varphi(s,t)\}_{(s,t)\in I^2}$ 
is a bounded Schur multiplier with norm 
$\leq C$ if and only if
for all $n\geq 1$, for all $[a_{ij}]_{1\leq i,j\leq n}$
in $M_n$ and for all
$t_1,\ldots,t_n, s_1,\ldots,s_n$ in $I$,
we have
\begin{equation}\label{Schur}
\bignorm{\bigl[\varphi(s_i,t_j)a_{ij}
\bigr]}_{B(\ell^p_n)}
\leq C\bignorm{[a_{ij}]}_{B(\ell^p_n)}.
\end{equation}
In the sequel, we apply the above definitions 
to the case when $I=\Rdb_+^*$.

\begin{thm}\label{Herz}
Let $\varphi\colon \Rdb_+^{*2}\to\Cdb$ be a continuous
bounded function. Let $1<p,p'<\infty$ be conjugate indices and
let $C\geq 0$ be a constant. The following assertions are equivalent.
\begin{itemize}
\item [(i)] The family$\{\varphi(s,t)\}_{(s,t)\in {\mathbb R}_{+}^{*2}}$ 
is a bounded Schur multiplier
on $B(\ell^{p}_{{\mathbb R}_{+}^{*}})$, with norm $\leq C$. 
\item [(ii)] There exist a measure space $(\Omega,\mu)$
as well as two
functions $\alpha\in L^\infty(\Rdb_+;L^p(\Omega))$ and $\beta\in L^\infty(\Rdb_+;L^{p'}(\Omega))$ such that 
$\norm{\alpha}_\infty\norm{\beta}_\infty\leq C$ and
$\varphi(s,t)=\langle\alpha(s),\beta(t)\rangle_{L^p,L^{p'}}$ for almost every $(s,t)\in \Rdb_+^{*2}$.
\end{itemize}
\end{thm}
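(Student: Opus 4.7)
My plan is to establish the two directions separately: direction $(ii)\Rightarrow(i)$ is a constructive factorisation yielding the Schur bound, and direction $(i)\Rightarrow(ii)$ will invoke Pisier's factorization theorem (Theorem \ref{PW}) together with the $SQ_p$-duality of Remark \ref{Duality-SQp}.

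For $(ii)\Rightarrow(i)$, I would first choose measurable representatives of $\alpha$ and $\beta$ for which the factorization holds off a null set, and use continuity of $\varphi$ to extend to arbitrary finite families $(s_i),(t_j)$. For any such family and any scalar matrix $[a_{ij}]\in M_n$, the Schur-multiplied matrix admits the representation
\[
\bigl[\varphi(s_i,t_j)a_{ij}\bigr]=\int_\Omega D_\alpha(\omega)\,[a_{ij}]\,D_\beta(\omega)\,d\mu(\omega)
\]
with $D_\alpha(\omega)=\operatorname{diag}(\alpha(s_i)(\omega))$ and $D_\beta(\omega)=\operatorname{diag}(\beta(t_j)(\omega))$. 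I would then recognise this as a composition $V\circ\pi(T)\circ U$, where $\pi$ is the tensor extension of Lemma \ref{Tensor-Ext} (a $p$-complete isometry) and $U,V$ are multiplication-type operators whose norms are controlled by $\|\beta\|_\infty$ and $\|\alpha\|_\infty$, respectively; the estimate (\ref{Schur}) then follows.

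For $(i)\Rightarrow(ii)$, the first step is to verify that $\|M_\varphi\|_{p\text{-}cb}=\|M_\varphi\|\leq C$. This is because, under the identification $M_n(B(\ell^p_{\Rdb_+^*}))\simeq B(\ell^p_{J_n\times\Rdb_+^*})$ used in the proof of Lemma \ref{Tensor-Ext}, the amplification of $M_\varphi$ is itself a Schur multiplier on the enlarged index set with symbol $((i,s),(j,t))\mapsto\varphi(s,t)$, and applying (\ref{Schur}) to finite sub-families recovers the same constant $C$. Theorem \ref{PW} then provides an $SQ_p$-space $E=E_1/E_2\subseteq L^p(\Omega)/E_2$, a unital $p$-completely contractive homomorphism $\pi\colon B(\ell^p_{\Rdb_+^*})\to B(E)$, and operators $V\colon\ell^p_{\Rdb_+^*}\to E$ and $W\colon E\to\ell^p_{\Rdb_+^*}$ with $\|V\|\|W\|\leq C$ such that $M_\varphi(T)=W\pi(T)V$ for all $T$. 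Evaluating this identity on the matrix unit $E_{st}$ and pairing with basis vectors gives $\varphi(s,t)=\langle\pi(E_{st})V(e_t),W^*e_s^*\rangle_{E,E^*}$, and the multiplicative identities $\pi(E_{ab})\pi(E_{bc})=\pi(E_{ac})$ allow one to transport everything to the range of a fixed projection $\pi(E_{s_0s_0})$. Lifting via (\ref{Dual-SQp}) then realises the two families as $\alpha\colon\Rdb_+^*\to L^p(\Omega)$ and $\beta\colon\Rdb_+^*\to L^{p'}(\Omega)$, with (\ref{Lifting}) producing the factorisation $\varphi(s,t)=\langle\alpha(s),\beta(t)\rangle_{L^p,L^{p'}}$ and the norm bound $\|\alpha\|_\infty\|\beta\|_\infty\leq C$.

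The main obstacle will be ensuring Bochner measurability of $\alpha$ and $\beta$ as required by the membership in $L^\infty(\Rdb_+;L^p(\Omega))$ and $L^\infty(\Rdb_+;L^{p'}(\Omega))$. The continuity of $\varphi$ is essential here: I would perform the extraction first on a countable dense subset of $\Rdb_+^*$ and then extend to all of $\Rdb_+$ by an ultralimit-type argument exploiting the weak compactness of balls in the reflexive spaces $L^p(\Omega)$ and $L^{p'}(\Omega)$, with the continuity of $\varphi$ ensuring that the pointwise identity survives the passage to the limit.
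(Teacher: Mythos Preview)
Your approach is entirely different from the paper's. The paper does not argue directly at all: it simply quotes \cite[Section 4.1]{Coine} for the equivalence of (ii) with the statement that $\varphi$ is a bounded Schur multiplier on $B(L^p(\Rdb_+))$ (the \emph{continuous} Schur multiplier condition), and then quotes \cite[Lemmas 1 and 2]{Herz} for the equivalence of this continuous condition with the discrete condition (i), using the continuity of $\varphi$. No Pisier factorisation is invoked here; that machinery is reserved for Theorems 3.2 and \ref{last}.

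Your direct route via Theorem \ref{PW} is natural, and the implication $(ii)\Rightarrow(i)$ as well as the automatic $p$-complete boundedness of $M_\varphi$ are both fine. But the gap you yourself flag in $(i)\Rightarrow(ii)$ is real and not closed by your sketch. The Pisier factorisation applied to $M_\varphi$ on $B(\ell^p_{\Rdb_+^*})$ produces, for each individual $s,t$, vectors $\eta_s\in E^*$ and $\xi_t\in E$ with $\varphi(s,t)=\langle\xi_t,\eta_s\rangle$; but the assignments $s\mapsto\eta_s$ and $t\mapsto\xi_t$ are built from the maps $s\mapsto\pi(E_{s,s_0})^*W^*e_s^*$ and $t\mapsto\pi(E_{s_0,t})Ve_t$, and there is no reason these should be strongly, or even weakly, measurable into the (typically non-separable) spaces $E^*$ and $E$. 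Your proposed cure---define on a countable dense $D\subset\Rdb_+^*$ and extend by ultralimits---does not help: ultralimits along non-principal ultrafilters notoriously destroy measurability, and the pointwise identity $\varphi(s,t)=\langle\xi_t,\eta_s\rangle$ surviving in the limit (which continuity of $\varphi$ does guarantee) says nothing about Bochner-measurability of the resulting $\alpha,\beta$. This is exactly the technical point that the passage through continuous Schur multipliers on $B(L^p(\Rdb_+))$ (Herz) and Coine's characterisation is designed to handle, and it is not elementary.

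There is also a secondary issue: with the paper's matrix convention $a_{st}=\langle Te_t,e_s\rangle$, your factorisation yields the column variable $t$ landing in $L^p(\Omega)$ and the row variable $s$ in $L^{p'}(\Omega)$, which is the transpose of what (ii) asserts. This is repairable (work with the pre-adjoint of $M_\varphi$, or exchange the roles in the matrix-unit splitting), but you should be aware of it.
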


\begin{proof} According to \cite[Section 4.1]{Coine},
(ii) is equivalent to the fact that 
as an element of $L^\infty(\Rdb_+^{2})$,
\begin{itemize}
\item [(ii')] $\varphi$ is a 
 bounded Schur multiplier on $B(L^p(\Rdb_+))$.
\end{itemize}
It further follows from \cite[Lemma 1 and Lemma 2]{Herz} 
that since $\varphi$ is continuous, (ii') is equivalent
to (i). The result follows.
\end{proof}

\section{Hankel operators on $\ell^p$ and their multipliers}
In this section we work on the sequence spaces
$\ell^p=\ell^p_{\tiny\Ndb}$, where
$\Ndb=\{0,1,\ldots\}$. For any $1<p<\infty$,
we let $(e_n)_{\geq 0}$ denote the classical
basis of $\ell^p$. For any
$T\in B(\ell^p)$, the associated matrix 
$[t_{ij}]_{i,j\geq 0}$ is given by
$t_{ij}=\langle T(e_j),e_i\rangle$, for all $i,j\geq 0$.

Let $Hank_p(\mathbb{N})\subseteq B(\ell^p)$ be the subspace of 
all $T\in B(\ell^p)$ whose matrix is Hankelian, i.e. 
has the form $[c_{i+j}]_{i,j\geq 0}$ for some 
sequence $(c_k)_{k\geq 0}$ of complex numbers.

Let $p'$ be the conjugate index of 
$p$ and regard $\ell^{p}\otimes \ell^{p'}\subset B(\ell^p)$
in the usual way. We set
\begin{equation*}
\gamma_k=\sum_{i+j=k}e_i\otimes e_j
\end{equation*}
for any $k\geq 0$.
Then each $\gamma_k$ belongs to $Hank_p(\mathbb{N})$,
and $\norm{\gamma_k}=1$.
Indeed, the matrix of $\gamma_k$ is $[c_{i+j}]_{i,j\geq 0}$
with $c_k=1$ and $c_l=0$ for all $l\not=k$.

\begin{lem}\label{Hankp}
For any $1<p<\infty$,
the space $Hank_p(\mathbb{N})$ is the $w^*$-closure
of the linear span of $\{\gamma_k\,:\,k\geq0\}$.
\end{lem}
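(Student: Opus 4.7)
The plan is to prove both inclusions via the duality $B(\ell^p) \simeq (\ell^p \widehat{\otimes} \ell^{p'})^*$ from (\ref{Duality}), which — combined with Lemma \ref{approx} applied to $A = \{e_j : j \geq 0\} \subset \ell^p$ and $B = \{e_i : i \geq 0\} \subset \ell^{p'}$ — says that a bounded net in $B(\ell^p)$ converges in $w^*$ exactly when its matrix coefficients converge entrywise.

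The easy inclusion $\overline{\mathrm{Span}}^{w^*}\{\gamma_k\} \subseteq Hank_p(\Ndb)$ follows because each $\gamma_k$ is visibly Hankelian, and $Hank_p(\Ndb)$ is $w^*$-closed: being Hankelian is the intersection of the countably many $w^*$-continuous linear equations $\langle T e_{j+1}, e_i\rangle = \langle T e_j, e_{i+1}\rangle$, $i,j \geq 0$.

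For the converse, fix $T \in Hank_p(\Ndb)$ with Hankel matrix $[c_{i+j}]$. The key elementary estimate is $|c_k| = |\langle T e_0, e_k\rangle| \leq \|T\|$. For each $r \in (0,1)$ I would introduce the diagonal contraction $D_r$ on $\ell^p$ with entries $(r^i)_{i \geq 0}$ and set $T_r := D_r T D_r$, which has matrix $[r^{i+j} c_{i+j}]$ and operator norm at most $\|T\|$. Using $|c_k| \leq \|T\|$ together with $\|\gamma_k\| = 1$, the series $\sum_{k \geq 0} r^k c_k \gamma_k$ is absolutely convergent in operator norm (dominated by $\|T\|/(1-r)$), and a matrix-entry inspection identifies its sum with $T_r$. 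Hence each $T_r$ lies in the norm closure — a fortiori the $w^*$-closure — of $\mathrm{Span}\{\gamma_k : k \geq 0\}$.

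To finish, I let $r \to 1^-$. The net $(T_r)_{r \in (0,1)}$ is bounded by $\|T\|$, and its matrix coefficients $r^{i+j} c_{i+j}$ tend to $c_{i+j}$ for every fixed $(i,j)$; Lemma \ref{approx} then gives $T_r \to T$ in $w^*$, so $T$ lies in the $w^*$-closed span. No step is a serious obstacle: the only delicate point is that the naive partial sums $\sum_{k=0}^{N} c_k \gamma_k$ need not be bounded in $B(\ell^p)$ as $N \to \infty$, and the Abel-type weighting by $r^{i+j}$ — realised operator-theoretically as conjugation by $D_r$ — is exactly what salvages both operator-norm summability of the series and the uniform norm control required by Lemma \ref{approx}.
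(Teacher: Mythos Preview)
Your proof is correct and complete. The approach differs from the paper's: where you use an Abel/Poisson summation --- conjugating by the diagonal contraction $D_r$ so that $\|T_r\|\leq\|T\|$ is immediate and $T_r=\sum_{k\geq 0}r^kc_k\gamma_k$ converges absolutely in norm --- the paper instead uses Ces\`aro summation via the Fej\'er kernel, defining $T_n$ with matrix $\bigl[\widehat{K_n}(i+j)c_{i+j}\bigr]$ and proving $\|T_n\|\leq\|T\|$ by an averaging argument that exploits the positivity of $K_n$. Your route is arguably more elementary: the norm control comes for free from the factorisation $T_r=D_rTD_r$, with no integral estimate needed. The paper's approach, on the other hand, produces approximants $T_n$ that are genuinely finite sums in ${\rm Span}\{\gamma_k\}$ rather than merely in its norm closure, and the Fej\'er-kernel technique generalises naturally to other summability contexts. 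Both land on Lemma~\ref{approx} in the same way for the final $w^*$-convergence step.
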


\begin{proof}
It is plain that $Hank_p(\mathbb{N})$ is a
$w^*$-closed subspace of $B(\ell^p)$, 
hence one inclusion is straightforward.

To check the other one, consider
$T\in Hank_p(\mathbb{N})$. By the definition of 
this space, there is a sequence $(c_k)_{k\geq 0}$ of $\mathbb{C}$ such that 
\begin{equation*}
\langle T(e_j),e_i \rangle=c_{i+j}, 
\quad\text{for all } i,j\geq0.
\end{equation*}
For any $n\geq1$, let $K_n$ be the Fejer kernel defined by
\begin{equation*}
K_n(t)=\sum_{k=-n}^{n}\left(1-\frac{\vert k\vert}{n}
\right) e^{int},\qquad t\in\Rdb.
\end{equation*}
Then let $T_n\in B(\ell^p)$ be the finite rank operator whose matrix is $\bigl[\widehat{K_n}(i+j)c_{i+j}\bigr]_{i,j\geq0}$. Note that 
$$
T_n=\sum_{k=0}^{n}\left(1-\frac{\vert k\vert}{n}\right)c_k\,
\gamma_k \,\in {\rm Span}\{\gamma_k\, :\, k\geq 0\}.
$$

We show that $\|T_n\|\leq\|T\|$. To see this, let 
$\alpha=(\alpha_j)_{j\geq0}\in\ell^p$ and 
$(\beta_m)_{m\geq0}\in\ell^{p'}$. We have that
\begin{align*}
\langle T_n(\alpha),\beta\rangle&=
\sum_{m,j\geq0}\widehat{K_n}(m+j)c_{m+j}\alpha_j\beta_m\\
&=\frac{1}{2\pi}\int_{-\pi}^{\pi}K_n(t)\sum_{m,j\geq0}c_{m+j}
\alpha_j\beta_m e^{-i(m+j)t}\, dt.
\end{align*}
Since $K_n\geq0$, we deduce
\begin{align*}
\bigl\vert \langle T_n(\alpha),
\beta\rangle\bigr\vert &\leq\frac{1}{2\pi}
\int_{-\pi}^{\pi}K_n(t)\,\biggl\vert
\sum_{m,j\geq0}c_{m+j}\alpha_j\beta_m e^{-i(m+j)t}\biggr\vert \,dt.
\end{align*}
Now  for all $t\in[-\pi,\pi]$, we have
\begin{align*}
\biggl\vert\sum_{m,j\geq0}c_{m+j}\alpha_j\beta_m 
e^{-i(m+j)t}\biggr\vert&=\Big\vert\sum_{m,j\geq0}c_{m+j}\left(e^{-ijt}
\alpha_j\right)\left(e^{-imt}\beta_m\right)\Big\vert\\
&=\Big\vert\Bigl\langle 
T\Bigl(\bigl(e^{-ijt}\alpha_j\bigr)_{j\geq0}\Bigr),
\bigl(e^{-imt}\beta_m\bigr)_{m\geq0}\Bigr\rangle\Big\vert\\
&\leq \|T\|\Bigl(\sum_{j\geq0}\vert e^{-ijt}\alpha_j\vert^p\Bigr)^{\frac{1}{p}}\Bigl(\sum_{m\geq0}\vert e^{-imt}\beta_m\vert^{p'}\Bigr)^{\frac{1}{p'}}\\
&\leq\|T\|\|\alpha\|_p\|\beta\|_{p'},
\end{align*}
Since $\frac{1}{2\pi}\int_{-\pi}^{\pi}K_n(t)dt=1$, 
we therefore obtain that $\vert\langle T_n(\alpha),
\beta\rangle\vert\leq\|T\|\|\alpha\|_p\|\beta\|_{p'}$. This proves  that $\|T_n\|\leq\| T\|$,
as requested.

For all $i,j\geq0$,
$$
\langle T_n(e_j),e_i\rangle = \widehat{K_n}(i+j)\,\langle Te_j,e_i\rangle
\longrightarrow \langle Te_j,e_i\rangle,
$$
when $n\to\infty$.
Hence $T_n\to T$ in the $w^*$-topology, by Lemma \ref{approx}.
Consequently, $T$ belongs to the $w^*$-closure of ${\rm Span}\{\gamma_k\, :\, k\geq 0\}.$
\end{proof}

\begin{rem}\label{DiscreteRem}

\ 

\smallskip
{\bf (a)\,} Nehari's celebrated theorem (see e.g.
\cite[Theorem II.2.2.4]{Ni}, \cite[Theorem I.1.1]{Peller}
or \cite[Theorem 1.3]{Po})) asserts that
\begin{equation}\label{Nehari}
Hank_2(\mathbb{N})\simeq \,\frac{L^\infty(\Tdb)}{H^\infty(\Tdb)}.
\end{equation}
Here $\Tdb$ stands for the unit circle of $\Cdb$ and
$H^\infty(\Tdb)\subset 
L^\infty(\Tdb)$ is the Hardy space of functions
whose negative Fourier coefficents vanish. The
isometric isomorphism $J\colon \frac{L^\infty(\small\Tdb)}{H^\infty(
\small\Tdb)}
\to Hank_2(\mathbb{N})$ providing (\ref{Nehari}) is defined as follows. Given any $F\in L^\infty(\Tdb)$,
let $\dot{F}$ denote its class modulo 
$H^\infty(\Tdb)$. Then $J(\dot{F})$ is 
the operator whose matrix is equal to 
$\bigl[\widehat{F}(-i-j-1)\bigr]_{i,j\geq 0}$.

{\bf (b)\,} We remark that $Hank_p(\mathbb{N})\subseteq Hank_2(\mathbb{N})$. To see this, note that if $T\in Hank_p(\mathbb{N})$, then because of the symmetry in its matrix representation due to being a Hankelian matrix, $T$ has the same matrix representation as $T^*$, and therefore $T$ extends to a bounded operator on $\ell^{p'}$. By interpolation, $T$ extends to a bounded operator on $\ell^2$, which is represented by the same matrix as $T$. Hence, $T$ belongs to $Hank_2(\mathbb{N})$.

However for $1<p\not=2<\infty$, there is no description
of $Hank_p(\mathbb{N})$ similar to Nehari's theorem.

\smallskip
{\bf (c)\,} The definition of $Hank_p(\mathbb{N})$ readily 
extends to the
case $p=1$. 
$$
Hank_1(\mathbb{N})\simeq\ell^1.
$$
isometrically.
Indeed, let $J_1\colon \ell^1\to Hank_1(\mathbb{N})$ be defined
by 
$$
J_1(c)=\sum_{k=0}^\infty c_k\gamma_k,\qquad 
c=(c_k)_{k\geq 0}\in \ell^1. 
$$
Next, let $J_2\colon
Hank_1(\mathbb{N})\to \ell^1$ be defined by $J_2(T)=T(e_0)$.
Then $J_1,J_2$ are contractions and it is easy
to check that they are inverse to each other.
Hence $J_1$ is an isometric isomorphism.
\end{rem}

We say that a sequence $m=(m_k)_{k\geq0}$ in $\mathbb{C}$ is 
the symbol of a multiplier
on $Hank_p(\mathbb{N})$ if there is a $w^*$-continuous operator $T_m:Hank_p(\mathbb{N})\to Hank_p(\mathbb{N})$ such that $$
T_m(\gamma_k)=m_k\gamma_k, 
\qquad k\geq 0.
$$
Note that such an operator is uniquely defined. In 
this case,  $m\in\ell^\infty$ and $\|m\|_\infty\leq\|T_m\|$.

The following is a
simple extension of \cite[Theorems 6.1 $\&$ 6.2]{Pis}.

\begin{thm}
Let $1<p<\infty$, let $C\geq 0$ be a constant and let
$m=(m_k)_{k\geq0}$ be a sequence in $\mathbb{C}$. 
The following assertions
are equivalent.
\begin{enumerate}
\item[(i)] $m$ is the symbol of a $p$-completely bounded multiplier 
on $Hank_p(\mathbb{N})$, and 
$$
\norm{T_m \colon
Hank_p(\mathbb{N})\longrightarrow 
Hank_p(\mathbb{N})}_{p-cb}\leq C.
$$
\item[(ii)] There exist a measure space $(\Omega,\mu)$, and bounded sequences $(\alpha_i)_{i\geq0}$ in $L^p(\Omega)$ 
and $(\beta_j)_{j\geq0}$ in $L^{p'}(\Omega)$ such that $m_{i+j}=\langle \alpha_i,\beta_j\rangle$, 
for every $i,j\geq0$, and 
$$
\sup_{i\geq 0}\norm{\alpha_i}_p \,
\sup_{j\geq 0}\norm{\beta_j}_{p'}\leq C. 
$$
\end{enumerate}
\end{thm}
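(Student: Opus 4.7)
The approach is to establish the equivalence between $T_m$ being $p$-completely bounded on $Hank_p(\Ndb)$ and the symbol $(i,j)\mapsto m_{i+j}$ being a $p$-completely bounded Schur multiplier on $B(\ell^p)$, and then to invoke the discrete analogue of Theorem \ref{Herz} (the sequence-space version of the Schur multiplier characterisation, see \cite[Theorem 6.1]{Pis}) to translate the latter into the inner-product representation (ii).

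For (ii) $\Rightarrow$ (i): The assumption provides the inner-product representation, which by the discrete analogue of Theorem \ref{Herz} makes $(i,j)\mapsto m_{i+j}$ into a $p$-cb Schur multiplier $S\colon B(\ell^p)\to B(\ell^p)$ with $p$-cb norm $\leq C$. Schur multiplication by a symbol depending only on $i+j$ preserves Hankel matrices (it sends $[c_{i+j}]$ to $[m_{i+j}c_{i+j}]$) and satisfies $S(\gamma_k)=m_k\gamma_k$, so $S\vert_{Hank_p(\Ndb)}=T_m$ inherits the norm bound. The $w^*$-continuity of $T_m$ follows from Lemma \ref{approx}: if $T_\iota\to T$ in the $w^*$-topology of $B(\ell^p)$, then $\langle T_\iota(e_j),e_i\rangle\to\langle T(e_j),e_i\rangle$ for all $i,j$; multiplying by $m_{i+j}$ and re-applying the lemma to the norm-bounded net $(S(T_\iota))$ yields $S(T_\iota)\to S(T)$ in $w^*$.

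For (i) $\Rightarrow$ (ii): The crucial step is to upgrade $p$-cb-ness of $T_m$ on the restricted space $Hank_p(\Ndb)$ to $p$-cb-ness of the Schur multiplier $[a_{ij}]\mapsto[m_{i+j}a_{ij}]$ on all of $B(\ell^p)$, with the same norm. Concretely, for every $N\geq 1$ and every $N\times N$ scalar matrix $[a_{ij}]$, one proves
\[
\bignorm{[m_{i+j}a_{ij}]}_{B(\ell^p_N)}\leq C\bignorm{[a_{ij}]}_{B(\ell^p_N)}.
\]
This is achieved by lifting $[a_{ij}]$ to a matrix of Hankel operators $[H_{ij}]\in M_N(Hank_p(\Ndb))$, constructed from the basis $\{\gamma_k\}$ via a suitable shift/block construction, whose $M_N(B(\ell^p))$-norm is controlled by $\|[a_{ij}]\|_{B(\ell^p_N)}$ and from whose image under $(T_m)_N$ one extracts $[m_{i+j}a_{ij}]$. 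This lifting is the content of Pisier's argument in \cite[Theorem 6.2]{Pis}. Having established the Schur multiplier bound, the discrete Schur multiplier characterisation then furnishes the inner-product representation in (ii).

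The principal obstacle is this lifting step in (i) $\Rightarrow$ (ii): constructing an embedding of general matrices into Hankel operators that preserves the operator norms and interacts correctly with the multiplier $T_m$. Once this bridge between Hankel and Schur multipliers is in place, the remainder of the argument is formal, reducing the theorem to a discrete instance of Theorem \ref{Herz}.
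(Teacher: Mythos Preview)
Your direction (ii) $\Rightarrow$ (i) is essentially the paper's argument: invoke \cite[Corollary 8.2]{Pis} to obtain a $p$-completely contractive Schur multiplier on $B(\ell^p)$ and restrict to $Hank_p(\Ndb)$.

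For (i) $\Rightarrow$ (ii), however, you take a genuinely different route. The paper does \emph{not} pass through Schur multipliers. Instead it introduces the $p$-completely contractive map $q\colon B(\ell^p_{\Zdb})\to B(\ell^p_{\Ndb})$, $q(T)=Q\kappa T J$, observes that $q$ sends the \emph{unital algebra} $\M_p(\Zdb)$ of Fourier multipliers into $Hank_p(\Ndb)$ with $q(\mathtt{s}^{-k})=\gamma_k$, and then applies Theorem~\ref{PW} (Pisier--Wittstock) to the composition $w=T_m\circ q_{|\M_p(\Zdb)}$. The multiplicativity of the resulting representation $\pi$ is what splits $m_{i+j}=\langle W\pi(\mathtt{s}^{-(i+j)})V e_i,e_j\rangle$ into $\langle\pi(\mathtt{s}^{-i})Ve_i,\pi(\mathtt{s}^{-j})^*W^*e_j\rangle$, after which the $SQ_p$-to-$L^p$ lifting of Remark~\ref{Duality-SQp} finishes the job. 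This delivers $(\alpha_i)$ and $(\beta_j)$ directly, without ever invoking a Schur-multiplier characterisation.

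Your approach is valid in principle, but the crucial step --- that the Hankel-multiplier bound on $T_m$ forces the Schur-multiplier bound for $(i,j)\mapsto m_{i+j}$ --- is only sketched. The natural lifting is $[a_{ij}]\mapsto [a_{ij}\gamma_{i+j}]\in M_N(Hank_p(\Ndb))$, and one must check that $\bignorm{[a_{ij}\gamma_{i+j}]}_{M_N(B(\ell^p))}=\bignorm{[a_{ij}]}_{B(\ell^p_N)}$. This identity does hold (decompose $\ell^p_N(\ell^p)$ along the ``anti-diagonals'' $l-j=\text{const}$ to see that $[a_{ij}\gamma_{i+j}]$ acts as an $\ell^p$-direct sum of compressions of $[a_{ij}]$), but you should supply this computation rather than defer to \cite[Theorem 6.2]{Pis}, which treats $p=2$; the $\ell^p$ material in \cite{Pis} is in Chapter~8.

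As for what each approach buys: yours is more elementary once the lifting lemma is in hand, but it uses the Schur characterisation as a black box. The paper's factorisation argument is self-contained modulo Theorem~\ref{PW}, and --- more importantly --- it is the template that extends to the continuous setting of Theorem~\ref{last}, where the role of $\M_p(\Zdb)$ is played by $\M_p(\Rdb)$ and the shifts $\mathtt{s}^{-k}$ by the translations $\tau_{-s}$.
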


\begin{proof} By homogeneity, we may assume that $C=1$ throughout this proof.

Assume $(i)$. Let $\kappa\colon
\ell^p_{\mathbb{Z}}\to\ell^p_{\mathbb{Z}}$ be defined by 
$\kappa((a_k)_{k\in\mathbb{Z}})=(a_{-k})_{k\in\mathbb{Z}}$, 
let $J\colon\ell^p_{\mathbb{N}}\to\ell^p_{\mathbb{Z}}$ 
be the canonical embedding and
let $Q\colon \ell^p_{\mathbb{Z}}\to\ell^p_{\mathbb{N}}$ 
be the canonical projection. 
Define $q\colon B(\ell^p_{\mathbb{Z}})\to B(\ell^p_{\mathbb{N}})$ 
by $q(T)=Q\kappa TJ$. 
According to the easy implication ``$(ii)\,
\Rightarrow\,(i)$" of Theorem \ref{PW}, the mapping 
$q$ is $p$-completely contractive. We note that if 
$[t_{i,j}]_{(i,j)\in\mathbb{Z}^2}$
is the matrix of some $T\in B(\ell^p_{\mathbb{Z}})$, 
then the matrix 
of $q(T)$ is equal to $[t_{-i,j}]_{(i,j)\in\mathbb{N}^2}$.

Let $\M_p(\mathbb{Z})\subseteq B(\ell^p_{\mathbb{Z}})$ be the space of 
all bounded Fourier multipliers on $\ell^p_{\mathbb{Z}}$; this
is a unital subalgebra.
Let $T\in \M_p(\mathbb{Z})$ 
and let $\phi\in L^\infty(\mathbb{T})$ denote its symbol. 
Then the  matrix of $T$ is equal
to $[\widehat{\phi}(i-j)]_{(i,j)\in\mathbb{Z}^2}$, hence 
the matrix of $q(T)$ is equal
to $[\widehat{\phi}(-i-j)]_{(i,j)\in\mathbb{N}^2}$. 
Hence, $q(T)$ is Hankelian. 
We can therefore consider the restriction map 
$$
q_{\vert{{\mathcal M}_p(\mathbb{Z})}}\colon \M_p(\mathbb{Z})\longrightarrow Hank_p(\mathbb{N}).
$$

Let $\mathtt{s}\colon\ell^p_{\mathbb{Z}}\to\ell^p_{\mathbb{Z}}$ be 
the shift operator defined by 
$\mathtt{s}(e_j)=e_{j+1}$, for all $j\in\mathbb{Z}$. 
We observe (left to the reader)
that 
\begin{equation}\label{qsn}
q(\mathtt{s}^{-k})=\gamma_k,\qquad k\in\mathbb{N}.
\end{equation}

We assume that 
$T_m:Hank_p(\mathbb{N})\to Hank_p(\mathbb{N})$ 
is $p$-completely contractive. 
Consider $w\colon \M_p(\mathbb{Z})\to Hank_p(\mathbb{N})
\subseteq B(\ell^p)$ 
defined by $w:=T_m\circ q_{\vert{{\mathcal M}_p(\mathbb{Z})}}$. Then $w$ is $p$-completely contractive. 
Applying Theorem \ref{PW} to $w$, we obtain an $SQ_p$-space $E$, 
a contractive homomorphism $\pi:\M_p(\mathbb{Z})\to B(E)$ and 
contractive maps $V:\ell^p_{\mathbb{N}}\to E$ and 
$W:E\to\ell^{p}_{\mathbb{N}}$ such that
\begin{equation}\label{eq1}
w(T)=W\pi(T)V,\qquad T\in \M_p(\mathbb{Z}).
\end{equation}
Let $i,j\geq0$. By (\ref{qsn}), we have
\begin{equation*}
w(\mathtt{s}^{-(i+j)})=T_m(q(\mathtt{s}^{-(i+j)}))=
T_m(\gamma_{i+j})=m_{i+j}\gamma_{i+j},
\end{equation*}
hence  $\bigl\langle w(\mathtt{s}^{-(i+j)})e_i,e_j
\bigr\rangle=m_{i+j}$. Consequently, from \eqref{eq1}, we obtain that
\begin{equation*}
m_{i+j}=\langle\pi(\mathtt{s}^{-(i+j)})V(e_i),W^*(e_j)\rangle_{E,E^*}.
\end{equation*}
The mapping $\pi$ is multiplicative, hence this implies that
$$
m_{i+j}=\langle\pi(\mathtt{s}^{-i})V(e_i),
\pi(\mathtt{s}^{-j})^*W^*(e_j)\rangle_{E,E^*}.
$$
Set $x_i:=\pi(\mathtt{s}^{-i})V(e_i)\in E$ and 
$y_j:=\pi(\mathtt{s}^{-j})^*W^*(e_j)\in E^*$. 
Then, for all $i,j\geq0$ we have 
$\norm{x_i}\leq 1$, $\norm{y_j}\leq 1$ and
$m_{i+j}=\langle x_i,y_j\rangle_{E,E^*}$.

Let us now apply Remark \ref{Duality-SQp}. 
As in the latter, consider a measure 
space $(\Omega,\mu)$ and
closed subspaces
$E_2\subset E_1\subset L^p(\Omega)$ such that $E=\frac{E_1}{E_2}$. Recall (\ref{Dual-SQp}). 
For any $i\geq 0$, pick $\alpha_i\in E_1$ such that $\norm{\alpha_i}_p=
\norm{x_i}$ and $\dot{\alpha_i}=x_i$. Likewise, for any 
$j\geq 0$,  pick $\beta_j\in E_2^\perp$ such that $\norm{\beta_j}_{p'}=
\norm{y_j}$ and $\dot{\beta_j}=y_j$. Then for all $i,j\geq 0$, 
we both 
have $\norm{\alpha_i}_p\leq 1$, 
$\norm{\beta_j}_{p'}\leq 1$ and 
$m_{i+j}=\langle \alpha_i,\beta_j\rangle_{L^p,L^{p'}}$.
This proves $(ii)$.

Conversely, assume $(ii)$. 
By \cite[Corollary 8.2]{Pis}, the 
family $\{m_{i+j}\}_{(i,j)\in{\mathbb N}^2}$ induces
a $p$-completely contractive Schur multiplier on $B(\ell^p)$. 
It is clear that the restriction of this Schur multiplier maps
$Hank_p(\mathbb{N})$ into itself. More precisely, 
it maps $\gamma_k$ to $m_k\gamma_k$ for all $k\geq 0$. 
Hence $m$ is the symbol
of a $p$-completely contractive  multiplier on $Hank_p(\mathbb{N})$.
\end{proof}


\section{Hankel operators on $L^p(\mathbb{R}_+)$}
Throughout we let $1<p<\infty$ and we let 
$p'$ denote its conjugate index.
For any $u>0$, we set $\tau_u f:=f(\,\cdotp -u)$, for all 
$f\in L^1(\mathbb{R})+L^\infty(\mathbb{R})$. Let 
$$
Hank_p(\mathbb{R}_+)\subseteq B(L^p(\mathbb{R}_+))
$$
be the space of Hankelian operators on $L^p(\mathbb{R}_+)$,
consisting of all bounded operators $T:L^p(\mathbb{R}_+)\to L^p(\mathbb{R}_+)$ 
such that
\begin{equation*}
\langle T\tau_u f,g\rangle=\langle Tf,\tau_u g\rangle,
\end{equation*}
for all $f\in L^p(\mathbb{R}_+)$, 
$g\in L^{p'}(\mathbb{R}_+)$ and $u>0$.

For any $u>0$, let $\theta_u:L^p(\mathbb{R_+})
\to L^p(\mathbb{R}_+)$ be 
defined by $\theta_u f =f(u-\,\cdotp)$. 
Note that $\theta_u$ is a Hankelian operator on $L^p(\mathbb{R}_+)$. 
Indeed, for all $f\in L^p(\mathbb{R}_+)$, 
$g\in L^{p'}(\mathbb{R}_+)$ and $v>0$, we have 
$$
\langle \theta_u\tau_v f,g\rangle= \int_{v}^{u} f(u-s)g(s-v)\, ds= 
\langle \theta_u f,\tau_v g\rangle 
$$
if $v<u$, and $\langle \theta_u\tau_v f,g\rangle=
\langle \theta_u f,\tau_v g\rangle=0$ 
if $v\geq u$. The operators $\theta_u$ are 
the continuous counterparts of
the operators $\gamma_k$ from Section 3. From this
point of view, part (1) of 
Theorem \ref{dual space} below is an
analogue of Lemma \ref{Hankp}.
However its proof is more delicate.

We introduce a new space 
$A_p(\mathbb{R}_+)\subseteq C_0(\mathbb{R}_+)$ by
\begin{equation*}
A_p(\mathbb{R}_+):=\bigg\{F=\sum_{n=1}^\infty f_n\ast g_n
\,:\, f_n\in L^p(\mathbb{R}_+),\,
g_n\in L^{p'}(\mathbb{R}_+)\text{ and }\sum_{n=1}^\infty\|f_n\|_p\|g_n\|_{p'}<\infty\bigg\},
\end{equation*}
and we equip it with the norm 
\begin{equation}\label{Norm}
\|F\|_{A_p}=\inf\Bigl\{\sum_{n=1}^\infty \|f_n\|_p\|g_n\|_{p'}\Bigr\},
\end{equation}
where the infimum runs over all possible representations 
of $F$ as above. 
The space $A_p(\mathbb{R}_+)$ is a half-line analogue 
of the classical Figa-Talamenca-Herz algebra
$A_p(\Rdb)$, see e.g. \cite{Der}. The classical arguments showing 
that the latter is a Banach space show as well that (\ref{Norm}) is 
a norm on $A_p(\mathbb{R}_+)$ and that $A_p(\mathbb{R}_+)$ is a Banach space.

It follows from the above definitions that there exists
a (necessarily unique) contractive map
\begin{equation*}
Q_p:L^p(\mathbb{R}_+)\widehat{\otimes}L^{p'}(\mathbb{R}_+)\longrightarrow A_p(\mathbb{R}_+)
\end{equation*}
such that $Q_p(f\otimes g)=f\ast g$,
for all
$f\in L^p(\mathbb{R}_+)$ and $g\in L^{p'}(\mathbb{R}_+)$. 
Moreover $Q_p$ is a quotient map. Hence the adjoint 
\begin{equation*} 
Q_p^*:A_p(\mathbb{R}_+)^*\longrightarrow B(L^p(\mathbb{R}_+))
\end{equation*}
of $Q_p$ is an isometry. This yields an isometric identification
$A_p(\mathbb{R}_+)^*\simeq \ker(Q_p)^\perp(={\rm ran}(Q_p^*))$.

We observe that
\begin{equation}\label{KerPerp}
\ker(Q_p)^\perp=\overline{\rm Span}^{w^*}\{\theta_u: u>0\}.
\end{equation}
To prove this, we note that
\begin{equation}\label{Convol}
\langle\theta_u,f\otimes g\rangle=\langle\theta_u(f),
g\rangle= (f\ast g) (u),
\end{equation}
for all $f\in L^p(\mathbb{R})$, $g\in L^{p'}(\mathbb{R}_+)$ and
$u>0$.  
Hence, 
$$
\Bigl\langle \theta_u,\sum_{n=1}^\infty
f_n\otimes g_n\Bigr\rangle = 
\Bigl(\sum_{n=1}^{\infty} f_n\ast g_n\Bigr)(u)
$$
for all sequences $(f_n)_n$
in $L^p(\mathbb{R}_+)$ and $(g_n)_n$
in $L^{p'}(\mathbb{R}_+)$ such that 
$\sum_{n=1}^\infty\|f_n\|_p\|g_n\|_{p'}<\infty$, and all $u>0$.
This implies that
$\text{Span}\{\theta_u: u>0\}_\perp=\ker(Q_p)$, 
and (\ref{KerPerp}) follows.

\begin{thm}\label{dual space} \ 

\begin{itemize}
\item [(1)] 
The space $Hank_p(\mathbb{R}_+)$ is equal to the
$w^*$-closure of the linear span of $\{\theta_u: u>0\}$.
\item [(2)] We have an isometric identification
$$
Hank_p(\mathbb{R}_+)\simeq A_p(\Rdb_+)^*.
$$
\end{itemize}
\end{thm}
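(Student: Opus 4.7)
My plan is to prove (1), from which (2) follows by combining with the identification $A_p(\Rdb_+)^*\simeq\ker(Q_p)^\perp$ via $Q_p^*$ and equation \eqref{KerPerp}. For (1), the inclusion $\overline{{\rm Span}}^{w^*}\{\theta_u\,:\,u>0\}\subseteq Hank_p(\Rdb_+)$ is immediate, since each $\theta_u$ is Hankelian and $Hank_p(\Rdb_+)$ is $w^*$-closed. For the reverse inclusion, I plan to imitate the Fej\'er-averaging argument of Lemma \ref{Hankp} in continuous form.

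Let $K_N$ denote the Fej\'er kernel on $\Rdb$ (so $K_N\ge 0$, $\tfrac{1}{2\pi}\int_\Rdb K_N = 1$, and $\widehat{K_N}(u) = (1-|u|/N)^+$), and for $t\in\Rdb$ let $M_t\in B(L^p(\Rdb_+))$ be the isometric modulation $M_t f(s):=e^{-ist}f(s)$. Given $T\in Hank_p(\Rdb_+)$, define $T_N$ by
$$
\langle T_N f,g\rangle:=\frac{1}{2\pi}\int_\Rdb K_N(t)\langle T M_t f,M_t g\rangle\,dt,\qquad f\in L^p(\Rdb_+),\ g\in L^{p'}(\Rdb_+).
$$
Three properties then need checking: (a) $T_N\in Hank_p(\Rdb_+)$, via the commutation $M_t\tau_u = e^{-iut}\tau_u M_t$ combined with the Hankelian property of $T$, the resulting phases cancelling inside the integral; (b) $\|T_N\|\le\|T\|$, using $K_N\ge 0$ and isometricity of $M_t$; (c) $T_N\to T$ in the $w^*$-topology, via continuity of $t\mapsto\langle T M_t f,M_t g\rangle$ at $t=0$, the approximate-identity property of Fej\'er kernels, and Lemma \ref{approx}.

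The expected main obstacle is to establish $T_N\in\overline{{\rm Span}}^{w^*}\{\theta_u\,:\,u>0\}$; once in hand, (c) and the $w^*$-closedness of the span finish the proof. Since $M_t\theta_u M_t = e^{-iut}\theta_u$, the averaging sends $\theta_u$ to $(1-|u|/N)^+\theta_u$, so $T_N$'s effective ``symbol'' should be a distribution compactly supported in $[0,N]$. To exploit this rigorously, I would apply a second smoothing from the right: for a smooth nonnegative approximate identity $\rho_\epsilon\in L^1(\Rdb_+)$ supported in $[0,\epsilon]$, set $T_{N,\epsilon}:=T_N S_{\rho_\epsilon}$ with $S_{\rho_\epsilon}f:=\rho_\epsilon\ast f$. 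Using the Hankelian consequence $T_N\tau_v = \sigma_v T_N$ (where $\sigma_v h(y) = h(y+v)$), one expects $T_{N,\epsilon}$ to admit a norm-convergent Bochner-integral representation $\int_0^N\Phi_{N,\epsilon}(u)\theta_u\,du$ with $\Phi_{N,\epsilon}\in L^1([0,N])\cap L^\infty([0,N])$, placing $T_{N,\epsilon}$ in the norm-closure of ${\rm Span}\{\theta_u\,:\,u>0\}$. Letting $\epsilon\to 0$, $S_{\rho_\epsilon}\to I$ strongly on $L^p(\Rdb_+)$, so $T_{N,\epsilon}\to T_N$ in $w^*$. The technical heart is the explicit construction of $\Phi_{N,\epsilon}$ and verification of the Bochner representation, without presupposing a function symbol for $T$ itself.
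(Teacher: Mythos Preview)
Your plan is genuinely different from the paper's, and the easy parts (a), (b), (c) are correct. However, the step you yourself flag as the ``technical heart'' is not a routine technicality --- it is the whole difficulty of the theorem, and your sketch does not resolve it.

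Concretely: the only natural candidate for $\Phi_{N,\epsilon}$ is $T_N\rho_\epsilon$, which lies in $L^p(\Rdb_+)$ and nothing better a priori. From the Hankel relation $T_N\tau_v=\sigma_vT_N$ one does obtain, for $f,g\in C_c(\Rdb_+)$,
\[
\langle T_{N,\epsilon}f,g\rangle=\int_0^\infty (T_N\rho_\epsilon)(u)\,(f*g)(u)\,du,
\]
but to conclude $T_{N,\epsilon}\in\ker(Q_p)^\perp$ you must interchange this with an \emph{infinite} sum $\sum_n f_n\otimes g_n\in\ker(Q_p)$, and for that you need $T_N\rho_\epsilon\in L^1$ (or compactly supported). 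Your heuristic that Fej\'er averaging truncates the ``symbol'' to $[0,N]$ presupposes that $T_N$ already \emph{has} a symbol --- i.e.\ that $\langle T_Nf,g\rangle$ depends only on $f*g$ --- which is exactly the statement $T_N\in\ker(Q_p)^\perp$ you are trying to prove. Nothing in the modulation averaging alone forces $T_N\rho_\epsilon$ to lie in $L^1$ or to be compactly supported; the effect of $K_N$ lives on the Fourier side of a symbol you have not yet constructed. (One can rescue the argument by invoking the Schwartz kernel theorem to produce a distributional symbol $\Phi\in\mathcal D'(\Rdb_+^*)$ with $\langle T\phi,\psi\rangle=\langle\Phi,\phi*\psi\rangle$ for test functions, then use a \emph{smooth} cutoff in place of $\widehat{K_N}$; but this is substantial extra machinery absent from your sketch.)

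By contrast, the paper avoids the symbol altogether via a \emph{spatial} double regularisation. For $k,l\in C_c(\Rdb)$ it sets
\[
\langle T_{k,l}f,g\rangle:=\int_{\Rdb}\langle T(\tau_uk\cdot f),\tau_{-u}l\cdot g\rangle\,du,
\]
shows $T_{k_n,l_n}\to T$ weak-$*$ for suitable $k_n,l_n$, and proves $T_{k,l}\in\ker(Q_p)^\perp$ via an intermediate smoothing $TR_\alpha$ (convolution by $\alpha\in C_c(\Rdb_+)^+$). The crucial identity is
\[
\sum_n\bigl\langle (TR_\alpha)_{k,l}f_n,g_n\bigr\rangle=\Bigl\langle T\alpha,\ (k*l)\cdot\sum_n f_n*g_n\Bigr\rangle,
\]
and here the compact support of $k*l$ makes the right-hand pairing an honest $L^p$--$L^{p'}$ duality: $T\alpha\in L^p$ and $(k*l)\cdot F\in C_c\subset L^{p'}$ for any $F\in C_0$. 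No symbol is ever invoked; the cutoff is on the physical variable, not the dual one, and that is what makes the infinite sum tractable.
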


\begin{proof}
Part (2) follows from part (1) and the discussion preceding the statement 
of Theorem \ref{dual space}. For any $f\in L^p(\Rdb_+)$, 
$g\in L^{p'}(\Rdb_+)$ and $u>0$, the functionals
$T\mapsto \langle T\tau_u f,g\rangle$ and $T\mapsto
\langle Tf,\tau_u g\rangle$ are $w^*$-continuous on $B(L^p(\Rdb_+))$. Consequently,
$Hank_p(\mathbb{R}_+)$ is $w^*$-closed. Hence 
$Hank_p(\mathbb{R}_+)$ contains the $w^*$-closure of 
${\rm Span}\{\theta_u: u>0\}$. To prove the reverse inclusion, it
suffices to show, by (\ref{KerPerp}), that
$$
Hank_p(\mathbb{R}_+)\subset \ker(Q_p)^\perp.
$$

We will use a double approximation process. 
First, let $k,l$ in $C_c(\Rdb)$, the space of continuous functions
with compact support.
To any $T\in B(L^p(\Rdb_+))$, we associate $T_{k,l}\in B(L^p(\Rdb_+))$
defined by 
\begin{equation*}
\langle T_{k,l}(f),g\rangle=\int_{\mathbb{R}}
\langle T(\tau_u k\cdot f),\tau_{-u}l\cdot g\rangle \,
du,\qquad 
f\in L^p(\mathbb{R}_+),\, g\in L^{p'}(\mathbb{R}_+).
\end{equation*}
We note that 
\begin{align*}
\int_{\mathbb{R}}\Big\vert\langle T(\tau_u k\cdot f),
\tau_{-u}l\cdot g\rangle\Big\vert \,du 
&\leq\| T\|_p\left(\int_{\mathbb{R}}\|\tau_u kf\|_p^p 
\, du\right)^{\frac{1}{p}}
\left(\int_{\mathbb{R}}\|\tau_{-u}lg\|_{p'}^{p'}
\, du\right)^{\frac{1}{p'}}\\
& = \|T\|_p\|f\|_p\|g\|_{p'}\|k\|_{p}\|l\|_{p'}.
\end{align*}
Thus, $T_{k,l}$ is well-defined and 
$\norm{T_{k,l}}\leq\norm{T}\|k\|_{p}\|l\|_{p'}$. 
We are going to show that
\begin{equation}\label{Show}
T\in 
Hank_p(\mathbb{R}_+)\,\Longrightarrow\, 
T_{k,l}\in \ker(Q_p)^\perp.
\end{equation}

%
%
%

Let $\alpha\in C_c(\mathbb{R}_+)^+$ such that $\|\alpha\|_1=1$. Let $R_\alpha\in B(L^p(\mathbb{R}_+))$ be defined by 
$$
R_{\alpha}(f)=\alpha\ast f,\qquad f\in L^p(\Rdb_+).
$$
We show that $(TR_{\alpha})_{k,l}$ belongs to $\ker(Q_p)^\perp$
if $T\in Hank_p(\mathbb{R}_+)$, and we use these auxiliary operators to establish 
(\ref{Show}).

We fix some $T\in Hank_p(\mathbb{R}_+)$.
Let $z\in\ker(Q_p)$. Since $C_c(\mathbb{R}_+)$ is both dense
in $L^p(\Rdb_+)$ and $L^{p'}(\Rdb_+)$, it follows e.g. from
\cite[Chapter 3, Proposition 6]{Der} that there exist sequences $(f_n)_{n\geq1}$ and $(g_n)_{n\geq1}$ in $C_c(\mathbb{R}_+)$ such that $\sum_{n=1}^\infty\|f_n\|_p\|g_n\|_{p'}<\infty$ and $z=\sum_{n=1}^\infty f_n\otimes g_n$. Since $z\in\ker(Q_p)$, we have $\sum_{n=1}^\infty f_n\ast g_n=0$, pointwise.

We write $R_\alpha f = \int_{\mathbb{R}_+}
f(s) \tau_s\alpha\,ds\,$
as a Bochner integral,
for all $f\in C_c(\Rdb_+)$. 
A simple application of Fubini's theorem leads to
$$
k\ast l\cdot f_n\ast g_n =\int_{\mathbb{R}}\int_{\mathbb{R}_+}
(\tau_u k \cdot f_n)(s)\tau_s(\tau_{-u} l \cdot g_n)\, dsdu,
$$
for all $n\geq 1$. We deduce that 
%
%
\begin{align*}
\sum_{n=1}^\infty\langle
\left(TR_{\alpha}\right)_{k,l}(f_n) , g_n\rangle
&=\sum_{n=1}^\infty\int_{\mathbb{R}}\langle 
TR_{\alpha}(\tau_uk\cdot f_n),\tau_{-u}l\cdot g_n\rangle\, du\\
&=\sum_{n=1}^\infty\int_{\mathbb{R}}
\langle T(\left(\tau_uk\cdot f_n)\ast\alpha\right),\tau_{-u}l
\cdot g_n\rangle\, du\\
&=\sum_{n=1}^\infty\int_{\mathbb{R}}\int_{\mathbb{R}_+}
\left(\tau_uk\cdot f_n\right)(s) 
\langle T(\tau_s\alpha), \tau_{-u}l\cdot g_n\rangle\, ds du\\
&=\sum_{n=1}^\infty\int_{\mathbb{R}}\int_{\mathbb{R}_+}
\langle T(\alpha), \left(\tau_u k\cdot f_n\right)(s)
\tau_s\left(\tau_{-u}l 
\cdot g_n\right)\rangle\, ds du\\
&=\sum_{n=1}^\infty\langle T(\alpha),k\ast 
l\cdot f_n\ast g_n\rangle\\
& = \Bigl\langle T(\alpha), k\ast l\cdot \sum_{n=1}^{\infty
}f_n\ast g_n
\Bigr\rangle=0.
\end{align*}
This shows that $(TR_\alpha)_{k,l}$ belongs to $\ker(Q_p)^\perp$.

For $z,f_n,g_n$ as above, write
$$
\sum_{n=1}^\infty\langle T_{k,l}(f_n),g_n\rangle
=
\sum_{n=1}^\infty\langle T_{k,l}(f_n),g_n\rangle-\sum_{n=1}^\infty\langle (TR_{\alpha})_{k,l}(f_n),g_n\rangle. 
$$
Then we have
\begin{align*}
\Big\vert\sum_{n=1}^\infty\langle T_{k,l}(f_n),g_n\rangle\Big\vert
& \leq\sum_{n=1}^\infty \int_{\mathbb{R}}\Big\vert\langle T
\left(\tau_u k\cdot f_n-\left(\tau_u k\cdot f_n\right)\ast\alpha\right),
\tau_{-u} l\cdot g_n\rangle\Big\vert\, du\\
&\leq\sum_{n=1}^\infty\|T\|\left(\int_\mathbb{R}\|\tau_u k\cdot 
f_n-\left(\tau_u k\cdot f_n\right)\ast\alpha\|_p^p \,
du\right)^{\frac{1}{p}}
\left(\int_{\mathbb{R}}\|\tau_{-u}l\cdot 
g_n\|_{p'}^{p'}\, du\right)^{\frac{1}{p'}}\\
&\leq\|T\|\|l\|_{p'}\sum_{n=1}^\infty
\|g_n\|_{p'}\left(\int_{\mathbb{R}}\|\tau_u k\cdot
f_n-\left(\tau_u k\cdot f_n\right)\ast\alpha\|_p^p 
\, du\right)^{\frac{1}{p}}.
\end{align*}

Recall that by assumption, $\alpha\geq 0$
and $\int_{\mathbb{R}_+}\alpha(s)ds=1$. 
Then we deduce from above that
\begin{align*}
\Big\vert\sum_{n=1}^\infty\langle T_{k,l}(f_n),g_n\rangle\Big\vert
& \leq \|T\|\|l\|_{p'} \sum_{n=1}^\infty \|g_n\|_{p'}\left(\int_{\mathbb{R}}
\Big\|\int_{\mathbb{R}_+}\alpha(s)\left(\tau_u k\cdot f_n-\tau_s
\left(\tau_u k\cdot f_n\right)\right)ds\Big\|_p^p
\, du\right)^{\frac{1}{p}}\\
&\leq \|T\|\|l\|_{p'}\sum_{n=1}^\infty\|g_n\|_{p'} 
\left(\int_{\mathbb{R}}
\int_{\mathbb{R}_+}\alpha(s)\big\|\tau_u k\cdot f_n-\tau_s
\left(\tau_u k\cdot f_n\right)\big\|_p^p
\, ds du\right)^{\frac{1}{p}}.
\end{align*}
The integral in the right hand-side satisfies
\begin{align*}
\Bigl(&\int_{\mathbb{R}}\int_{\mathbb{R}_+} \alpha(s)
\big\|\tau_u k\cdot f_n-\tau_s\left(\tau_u k\cdot f_n
\right)\big\|_p^p\, dsdu\Bigr)^{\frac{1}{p}}\\
&\leq \Bigl(\int_{\mathbb{R}}
\int_{\mathbb{R}_+}\alpha(s)\big\|\tau_u k\cdot f_n-\tau_{s+u} k\cdot f_n\big\|_p^pdsdu\Bigr)^{\frac{1}{p}}\\
& \quad+\Bigl(\int_{\mathbb{R}}
\int_{\mathbb{R}_+}\alpha(s)\big\|\tau_{s+u} k\cdot f_n-\tau_{s} 
\left(\tau_u k\cdot f_n\right)\big\|_p^p
\, dsdu\Bigr)^{\frac{1}{p}}\\
&\leq \Bigl(\int_{\mathbb{R}}
\int_{\mathbb{R}_+}\alpha(s)\big\|
\tau_u\left(\left( k-\tau_{s} k\right)
\cdot f_n\right)\big\|_p^p
\, dsdu\Bigr)^{\frac{1}{p}}
+\Bigl(\int_{\mathbb{R}}
\int_{\mathbb{R}_+}\alpha(s)\big\|\tau_{s+u} k\cdot \left(f_n-\tau_{s} f_n\right)\big\|_p^pds
du\Bigr)^{\frac{1}{p}}\\
&\leq\sup_{s\in supp(\alpha)}
\Bigl(\int_{\mathbb{R}}\big\| \tau_u
\left(k-\tau_{s} k\right)\cdot f_n\big\|_p^p
\, du
\Bigr)^{\frac{1}{p}}+\sup_{s\in supp(\alpha)}
\Bigl(\int_{\mathbb{R}}\big\| \tau_{s+u} k\cdot \left(f_n-\tau_{s} 
f_n\right)\big\|_p^p\, du\Bigr)^{\frac{1}{p}}\\
&=\sup_{s\in supp(\alpha)}\big\|k-\tau_{s} k\big\|_p 
\big\|f_n\big\|_p+\sup_{s\in supp(\alpha)}\|k\|_p\|f_n-\tau_s f_n\|_p.
\end{align*}
Hence we obtain that
$$
\Big\vert \sum_{n=1}^\infty\langle T_{k,l}(f_n),  g_n\rangle\Big\vert
\leq \|T\|\|l\|_{p'}\sum_{n=1}^{\infty}\|g_n\|_{p'}
\left(\sup_{s\in supp(\alpha)}\big\|k-\tau_{s} k\big\|_p 
\big\|f_n\big\|_p+\sup_{s\in supp(\alpha)}\|k\|_p\|f_n-\tau_s f_n\|_p\right).
$$

Given $\epsilon>0$, choose $M$ such 
that 
$$
\sum_{n=M+1}^\infty\|f_n\|_p\|g_n\|_{p'}< \epsilon.
$$
We may find
$s_0>0$ such that for all $s\in(0,s_0)$ and for all $1\leq n\leq M$, we have that 
\begin{equation*}
\|k-\tau_s k\|_p\leq\frac{\epsilon \|k\|_p}{\sum_{n=1}^\infty\|f_n\|_p\|g_n\|_{p'}}\quad\text{ and }\quad\|f_n-\tau_s f_n\|_p\leq\frac{\epsilon}{M\|g_n\|_{p'}}.
\end{equation*}
We may now choose $\alpha$ so that $supp(\alpha)\subseteq(0,t_0)$. Then we obtain from above that 
\begin{align*}
\Big\vert \sum_{n=1}^\infty\langle T_{k,l}(f_n),g_n\rangle\Big\vert
& \leq \|T\|\|l\|_{p'} \Bigl(\epsilon\|k\|_p
+\sum_{n=1}^{M} \|g_n\|_{p'}\cdot \sup_{s\in supp(\alpha)}\|k\|_p\|f_n-\tau_s f_n\|_p\\
& +\sum_{n=M+1}^{\infty} \|g_n\|_{p'}\cdot 
\sup_{s\in supp(\alpha)}\|k\|_p\|f_n-\tau_s f_n\|_p\Bigr)\\
&\leq \|T\|\|l\|_{p'} \Bigl(2 \epsilon\|k\|_p + \sum_{n=M+1}^{\infty} 2
\|k\|_p\|g_n\|_{p'}
\|f_n\|_p\Bigr)\\
& \leq 4\epsilon \|T\|\|l\|_{p'} \|k\|_p.
\end{align*}
Since $\epsilon$ was arbitrary,
this shows that $\sum_{n=1}^\infty\langle T_{k,l}(f_n),g_n\rangle=0$. Since 
$z=\sum_{n=1}^\infty f_n\otimes g_n$ was an arbitrary element of $\ker(Q_p)$, 
we obtain (\ref{Show}).

Next, we construct a sequence $(T_{k_n,l_n})_n$ 
which tends to $T$ in the $w^*$-topology of $B(L^p(\mathbb{R}_+))$. 
In the sequel, we assume that $k,l$ in $C_c(\Rdb)$
are such that 
\begin{equation}\label{Normal}
\|k\|_p=1,\ \|l\|_{p'}=1
\qquad\hbox{and}\qquad \int_{\mathbb{R}}k(-s)l(s)\, ds=1.
\end{equation}
Consider any $f,g\in C_c(\Rdb_+)$. We have
\begin{align*}
\Big\vert\langle T(f),g\rangle-\langle T_{k,l}(f),g\rangle\Big\vert
&=\Big\vert\int_{\mathbb{R}}\langle T\left(k(-s)f\right),
l(s)g\rangle -\langle T(\tau_s k\cdot f),\tau_{-s}l\cdot g\rangle 
\, ds\Big\vert\\
&\leq\int_{\mathbb{R}}\Big\vert\langle 
T\left(\left(k(-s)-\tau_s k\right)  
f\right),l(s)g\rangle
\Big\vert \, ds \\
&\quad +
\int_{\mathbb{R}} \Big\vert 
\langle T\left(\tau_s k \cdot f\right),
\left(l(s)-\tau_{-s}l\right)g\rangle\Big\vert\, ds\\
&\leq\|T\|\left(\int_{\mathbb{R}}\|\left(k(-s)-\tau_sk\right)f\|_p^p
ds\right)^{\frac{1}{p}}\left(\int_{\mathbb{R}}\|l(s)g\|_{p'}^{p'}
\, ds\right)^{\frac{1}{p'}}\\
&+\|T\|\left(\int_{\mathbb{R}}\|\tau_sk\cdot 
f\|_p^pds\right)^{\frac{1}{p}}
\left(\int_{\mathbb{R}}\|\left(l(s)-\tau_{-s}l\right)
g\|_{p'}^{p'}
\, ds\right)^{\frac{1}{p'}}\\
&\leq\|T\| \|g\|_{p'}\left(\int_{\mathbb{R}_+}\big\vert f(t)
\big\vert^p\|\tau_t\check{k}-\check{k}\|_p^p
\, dt\right)^{\frac{1}{p}}\\ 
& +\|T\|\|f\|_p\left(\int_{\mathbb{R}_+}\big\vert g(t)
\big\vert^{p'}\|\tau_{-t}l-l\|_{p'}^{p'}\, dt
\right)^{\frac{1}{p'}}.
\end{align*}
Here $\check{k}$ denotes the function $s\mapsto k(-s)$.

Now for $n\geq 1$, set $k_n:=\frac{\chi_{[-n,n]}}{(2n)^{\frac{1}{p}}}$ and 
$l_n:=\frac{\chi_{[-n,n]}}{(2n)^{\frac{1}{p'}}}$,
where $\chi_{[-n,n]}$ is the indicator function of the interval 
$[-n,n]$.
Then 
$\|k_n\|_p=\|l_n\|_{p'}=1$
and $\int_\mathbb{R}k_n(-s)l_n(s)\, ds=1$
as in (\ref{Normal}).
Let $K=supp(f)\cup supp(g)$ and let $r=\sup(K)$. Note that 
$\check{k_n}=k_n$ and that 
we have
\begin{align*}
\sup_{t\in K}\|\tau_t {k_n}-{k_n}\|_p\leq
\left(\frac{r}{n}\right)^{\frac{1}{p}}\quad\text{ and }
\quad\sup_{t\in K}\|\tau_{-t}l_n-l_n\|_{p'}\leq\left(\frac{r}{n}\right)^{\frac{1}{p'}}.
\end{align*}
Therefore,
\begin{align*}
\Big\vert\langle T(f),g\rangle-\langle T_{k_n,l_n}(f),
g\rangle\Big\vert\leq\frac{2r}{n}\|T\|\|f\|_p\|g\|_{p'},
\end{align*}
hence $\langle T_{k_n,l_n}(f),g\rangle\xrightarrow[n\to\infty]{}\langle T(f),g\rangle$.
Since $\|T_{k_n,l_n}\|\leq\|T\|$ for all $n\geq 1$, this implies, by Lemma \ref{approx}, that
$T_{k_n,l_n}\to T$ in the $w^*$-topology of $B(L^p(\mathbb{R}_+))$.
Consequently, $T\in\ker(Q_p)^\perp$ as expected.
\end{proof}

\begin{rem}\label{ReHank}

\ 

\smallskip
{\bf (a)\,} For any $1\leq p\leq \infty$, 
let $H^p(\Rdb)\subset L^p(\Rdb)$ be the subspace 
of all $f\in L^p(\Rdb)$ whose Fourier transform has support 
in $\Rdb_+$. Recall the factorisation property
\begin{equation*}
H^1(\mathbb{R})=H^2(\mathbb{R})\times H^2(\mathbb{R}).
\end{equation*}
More precisely, the product $h_1h_2\in H^1(\mathbb{R})$ 
and $\norm{h_1h_2}_1
\leq \norm{h_1}_2\norm{h_2}_2$ for all $h_1,h_2\in H^2(\mathbb{R})$
and conversely, for 
all $h\in H^1(\mathbb{R})$, there exist
$h_1,h_2\in H^2(\mathbb{R})$ such that $h=h_1h_2$ and 
$\norm{h}_1=\norm{h_1}_2\norm{h_2}_2$.

Recall that by definition,
\begin{equation*}
A_2(\mathbb{R}_+)=\Big\{\sum f_n\ast g_n: f_n,g_n\in L^2(\mathbb{R}_+), \sum\|f_n\|_2\|g_n\|_2<\infty\Big\}.
\end{equation*}
It therefore follows from the above factorisation property
and the identification of $L^2(\mathbb{R}_+)$ with $H^2(\mathbb{R})$
via the Fourier transform that 
\begin{equation*}
A_2(\mathbb{R}_+)=\big\{\hat{h}:h\in H^1(\mathbb{R})\},
\end{equation*}
with $\|\hat{h}\|_{A_2(\mathbb{R}_+)}=
\|h\|_{H^1(\mathbb{R})}$. Therefore, we have an isometric identification 
$$
A_2(\mathbb{R}_+)\cong H^1(\mathbb{R}).
$$
Since $H^1(\Rdb)^\perp = H^\infty(\Rdb)$, we have
$H_1(\mathbb{R})^*\cong
\frac{L^\infty(\mathbb{R})}{H^\infty(\mathbb{R})}$.
Applying Theorem \ref{dual space} (2), 
we recover the well-known fact  
(see \cite[Section IV.5.3]{Ni} or \cite[Theorem I.8.1]{Peller})
that 
$$
Hank_2(\mathbb{R}_+)\cong\frac{L^\infty(\mathbb{R})}{H^\infty(\mathbb{R})}.
$$

\smallskip
{\bf (b)\,} 
 We remark that $Hank_p(\mathbb{R}_+)\subseteq Hank_2(\mathbb{R}_+)$. 
Indeed, suppose that $T\in Hank_p(\mathbb{R}_+)$ and note that the adjoint mapping $T^*\in B(L^{p'}(\mathbb{R}_+))$ coincides with $T$ on $L^p(\mathbb{R}_+)\cap L^{p'}(\mathbb{R}_+)$. To see this, take $f,g \in L^{p}(\mathbb{R}_+)\cap L^{p'}(\mathbb{R}_+)$ and observe that
$f\otimes g -g\otimes f$ belongs to $\ker(Q_p)$. This implies
that $\langle T(f),g\rangle = \langle T(g),f\rangle$. 
Therefore, $T$ 
coincides with $T^*$ on $L^{p}(\mathbb{R}_+)\cap L^{p'}
(\mathbb{R}_+)$. It then follows by interpolation that 
$T$ extends to 
a bounded operator on $L^2(\mathbb{R}_+)$, 
say $\widetilde{T}$. Since 
$T$ and $\widetilde{T}$ coincide on $L^p(\mathbb{R}_+)\cap 
L^2(\mathbb{R}_+)$ and $T$ is Hankelian, it follows from the 
definition of Hankel operators that $\widetilde{T}$ is also a Hankel 
operator and hence belongs to $Hank_{2}(\mathbb{R}_+)$.

\smallskip
{\bf (c)\,} The definition of $Hank_p(\mathbb{R}_+)$ 
extends to the case $p=1$. In analogy with 
Remark \ref{DiscreteRem} (c), we have an isometric identification 
$$
Hank_1(\mathbb{R}_+)\simeq M(\Rdb_+^*),
$$
where $M(\Rdb_+^*)$ denotes the space of all
bounded Borel measures
on $\Rdb_+^*$. To establish this, we first note that 
for all $f\in L^1(\Rdb_+)$, the function
$u\mapsto \theta_u(f)$ is bounded and continuous
from $\Rdb_+^*$ into $L^1(\Rdb_+)$.
Hence for all
$\nu\in M(\Rdb_+^*)$,
we may define $H_\nu\in B(L^1(\Rdb_+))$ by
\begin{equation}\label{H}
H_\nu(f) = \int_{\mathbb{R}_+^*}\theta_u(f)\,d\nu(u),
\qquad f\in L^1(\Rdb_+).
\end{equation}
It is clear that $H_\nu$ is Hankelian. It follows from 
(\ref{Convol}) that 
$$
\bigl\langle H_\nu(f),g\bigr\rangle
= \int_{\mathbb{R}_+^*}(f\ast g)(u)\,\,d\nu(u),
\qquad f\in L^1(\Rdb_+),\, g\in L^\infty(\Rdb_+).
$$

We note that
the mapping $\nu\mapsto H_\nu$ is a 1-1 contraction
from $M(\Rdb_+^*)$ into $Hank_1(\mathbb{R}_+)$. We shall now
prove that this mapping is an onto isometry.

We use the isometric identification  
$M(\Rdb_+^*)\simeq C_0(\Rdb_+^*)^*$ provided by the Riesz theorem
and we regard $L^1(\Rdb_+)\subseteq M(\Rdb_+^*)$ 
in the obvious way. Let $T\in Hank_1(\mathbb{R}_+)$. 
We observe that for all $h,f\in L^1(\Rdb_+)$
and all $g\in C_0(\Rdb_+^*)$, we have
\begin{equation}\label{Commute}
\bigl\langle T(h\ast f),g\bigr\rangle
=\bigl\langle T(h),f\ast g\bigr\rangle
\end{equation}
Indeed, write $h\ast f=\int_{0}^\infty f(s)\tau_sh\, ds.$
This implies that $T(h\ast f)=\int_{0}^\infty
f(s) T(\tau_s h)\, ds$,
hence 
$$
\bigl\langle T(h\ast f),g \bigr\rangle =\int_{0}^\infty  f(s)
\langle T\tau_s h,g\rangle\, ds\,= \int_{0}^\infty f(s)
\langle Th,\tau_s g\rangle\, ds\,=
\bigl\langle T(h),f\ast g\bigr\rangle.
$$
Let $(h_n)_{n\geq 1}$ be a norm one
approximate unit of $L^1(\Rdb_+)$. Then $(T(h_n))_{n\geq 1}$
is a bounded sequence of $L^1(\Rdb_+)$. Hence it admits a 
cluster point $\nu\in M(\Rdb_+^*)$ in the $w^*$-topology of
$M(\Rdb_+^*)$. Thus, for all $g\in C_0(\Rdb_+^*)$, 
the complex number $\int_{{\mathbb R}_+^*} g(u)\, d\nu(u)\,$ is  
a cluster point of the sequence $(\langle T(h_n),g\rangle)_{n\geq 1}$.
Furthermore, we have $\norm{\nu}\leq\norm{T}$.
Let $f\in L^1(\Rdb_+)$ and let $g\in C_0(\Rdb_+^*)$.
Since $h_n\ast f\to f$ in $L^1(\Rdb_+),$ we have
that $\langle T(h_n\ast f),g\rangle\to 
\langle T (f),g\rangle$.
By (\ref{Commute}), we may write $\langle T(h_n\ast f),g\rangle=\langle T(h_n),f\ast g\rangle$. We deduce that
$$
\langle T (f),g\rangle=\int_{{\mathbb R}_+^*} 
(f\ast g)(u)\, d\nu(u).
$$
This implies that $T=H_\nu$, see (\ref{H}),
which concludes the proof.
\end{rem}

\begin{dfn}
We say that a function
$m:\mathbb{R}_+^*\to\mathbb{C}$ is the symbol of a 
multiplier on $Hank_p(\mathbb{R}_+)$ if there exist a $w^*$-continuous operator $T_m:Hank_p(\mathbb{R}_+)\to Hank_p(\mathbb{R}_+)$ 
such that for every $u>0$, $T_m(\theta_u)=m(u)\theta_u$.
(Note that such an operator  $T_m$ is necessarily unique.) 
\end{dfn}

\begin{rem}\label{pre-dual}
Suppose that $T_m
\colon Hank_p(\mathbb{R}_+)\to Hank_p(\mathbb{R}_+)$ is a multiplier
as defined above. Using Theorem \ref{dual space} (2),
let $S_m\colon
A_p(\mathbb{R}_+)\to A_p(\mathbb{R}_+)$ be the
operator such that $S_m^*=T_m$. 
For $f\in L^p(\mathbb{R}_+)$ and $g\in L^{p'}(\mathbb{R}_+)$, 
we have, by (\ref{Convol}),
\begin{align*}
[S_m(f\ast g)](u)&=\langle\theta_u,S_m(f\ast g)\rangle\\
&=\langle T_m(\theta_u),f\ast g\rangle\\
& =m(u)
\langle\theta_u,f\ast g\rangle\\
& =m(u)\left(f\ast g\right)(u).
\end{align*}
We deduce that $S_m(F)=m\cdot F$, 
for every $F\in A_p(\mathbb{R}_+)$.

Conversely, if $m\colon \mathbb{R}_+^*\to\mathbb{C}$ is such that $S_m\colon A_p(\mathbb{R}_+)\to A_p(\mathbb{R}_+)$ 
given by $S_m(F)=m\cdot F$ is well-defined and bounded, then 
$S_m^*$ is a multiplier on $Hank_p(\mathbb{R}_+)$.
\end{rem}

\begin{lem}\label{Continuous}
If $m\colon \mathbb{R}_+^*\to\mathbb{C}$ is the symbol of a
multiplier on $Hank_p(\mathbb{R}_+)$, then $m$ is continuous and bounded.
\end{lem}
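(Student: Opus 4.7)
The plan is to combine the $w^*$-continuity of $T_m$ with the fact that $u\mapsto\theta_u$ is $w^*$-continuous from $\mathbb{R}_+^*$ into $B(L^p(\mathbb{R}_+))$. Once the latter is secured, both boundedness and continuity of $m$ follow from evaluating $T_m(\theta_u)=m(u)\theta_u$ against suitable test tensors.

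Boundedness is immediate from $\|\theta_u\|=1$: for $f\in L^p(\mathbb{R}_+)$, $(\theta_u f)(t)=f(u-t)\chi_{[0,u]}(t)$, so
$$
\|\theta_u f\|_p^p=\int_0^u|f(u-t)|^p\, dt=\int_0^u|f(s)|^p\, ds\leq\|f\|_p^p,
$$
with equality for $f$ supported in $[0,u]$. Hence $|m(u)|=\|m(u)\theta_u\|=\|T_m(\theta_u)\|\leq\|T_m\|$, giving $\|m\|_\infty\leq\|T_m\|$.

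For continuity, I would first verify the $w^*$-continuity of $u\mapsto\theta_u$ through the duality $B(L^p(\mathbb{R}_+))\simeq (L^p(\mathbb{R}_+)\widehat{\otimes} L^{p'}(\mathbb{R}_+))^*$. On elementary tensors, (\ref{Convol}) gives $\langle\theta_u,f\otimes g\rangle=(f\ast g)(u)$, which is continuous in $u$ since $f\ast g\in C_0(\mathbb{R})$. For a general $z=\sum_n f_n\otimes g_n$ with $\sum_n\|f_n\|_p\|g_n\|_{p'}<\infty$, the series $\sum_n(f_n\ast g_n)(u)$ converges uniformly in $u$ because $\|f_n\ast g_n\|_\infty\leq\|f_n\|_p\|g_n\|_{p'}$, so $u\mapsto\langle\theta_u,z\rangle$ is continuous on $\mathbb{R}_+^*$.

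Composing with the $w^*$-continuous operator $T_m$, the map $u\mapsto m(u)\theta_u=T_m(\theta_u)$ is also $w^*$-continuous. Given $u_0>0$, pick $\delta\in(0,u_0/2)$ and set $f=g=\chi_{[u_0/2-\delta,\, u_0/2+\delta]}$, which lies in $L^p(\mathbb{R}_+)\cap L^{p'}(\mathbb{R}_+)$. A direct calculation shows $f\ast g$ is a tent function, continuous and strictly positive on $V:=(u_0-2\delta,\, u_0+2\delta)$. On $V$, the identity
$$
m(u)(f\ast g)(u)=\langle m(u)\theta_u,\,f\otimes g\rangle
$$
exhibits $m(u)(f\ast g)(u)$ as a continuous function of $u$, and dividing by the non-vanishing continuous factor $f\ast g$ yields continuity of $m$ at $u_0$. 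The main subtlety is the $w^*$-continuity of $u\mapsto\theta_u$ on the full projective tensor product, but this is handled by the uniform convergence of the representing series together with the uniform bound $\|\theta_u\|=1$.
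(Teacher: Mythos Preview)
Your proof is correct and follows essentially the same approach as the paper: both establish that $u\mapsto m(u)(f\ast g)(u)$ is continuous for suitable $f,g$ and then divide by the non-vanishing continuous factor $(f\ast g)(u)$. The only cosmetic difference is that the paper works on the predual side, invoking Remark~\ref{pre-dual} to say $m\cdot(\chi_{(0,a)}\ast\chi_{(0,a)})\in A_p(\mathbb{R}_+)\subseteq C_0(\mathbb{R}_+)$, whereas you work on the dual side via the $w^*$-continuity of $u\mapsto T_m(\theta_u)$; these are equivalent formulations of the same idea.
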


\begin{proof}
For all $u>0$, we have $m(u)\theta_u=T_m(\theta_u)$, hence
$\vert m(u)\vert\leq\norm{T_m}$. Thus, $m$ is bounded. 
For any $a>0$, let $\chi_{(0,a)}$ be the indicator function
of the interval $(0,a)$. Then 
$m\cdot \chi_{(0,a)}*\chi_{(0,a)}$ belongs
to $A_p(\mathbb{R}_+)$, hence to $C_b(\Rdb_+^*)$, by Remark
\ref{pre-dual}. Since 
$\chi_{(0,a)}*\chi_{(0,a)}>0$ on $(0,2a)$, it follows that 
$m$ is continuous on $(0,2a)$. Thus, $m$ is continuous on
$\Rdb_+^*$.
\end{proof}

\begin{thm}\label{last}
Let $1<p<\infty$, let $C\geq 0$ be a 
constant and let
$m\colon \mathbb{R}_+^*\to\mathbb{C}$ be a function. 
The following assertions
are equivalent.
\begin{enumerate}
\item[(i)] $m$ is the symbol of a $p$-completely bounded
multiplier on $Hank_p(\mathbb{R}_{+})$, and 
$$
\norm{T_m \colon
Hank_p(\mathbb{R}_{+})\longrightarrow 
Hank_p(\mathbb{R}_{+})}_{p-cb}\leq C.
$$
\item[(ii)] $m$ is continuous and 
there exist a measure space $(\Omega,\mu)$ and two
functions $\alpha\in L^\infty(\mathbb{R}_{+}; L^p(\Omega))$
and $\beta\in L^\infty(\mathbb{R}_{+}; L^{p'}(\Omega))$ 
such that $\|\alpha\|_\infty\|\beta\|_\infty\leq C$ and 
$m(s+t)=\langle\alpha(s),\beta(t)\rangle$, 
for almost every $(s,t)\in\mathbb{R}_{+}^{*2}$.
\end{enumerate}
\end{thm}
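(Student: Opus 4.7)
The plan is to mirror the discrete proof for $Hank_p(\Ndb)$, invoking Theorem \ref{Herz} in place of the discrete Schur-multiplier characterisation. Given the factorisation of (ii), Theorem \ref{Herz} makes $(s,t)\mapsto m(s+t)$ a bounded Schur multiplier on $B(\ell^p_{\Rdb_+^*})$ of norm $\leq C$. Schur multipliers act entry-wise on $M_n$, so they are automatically $p$-completely bounded with the same norm; realised on $B(L^p(\Rdb_+))$, this multiplier sends a kernel $c(s+t)$ to $m(s+t)c(s+t)$, hence preserves $Hank_p(\Rdb_+)$ and, on $\theta_u$, acts as multiplication by $m(u)$. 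Its restriction is therefore the $w^*$-continuous multiplier $T_m$, which inherits the $p$-cb bound.

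\textbf{(i) $\Rightarrow$ (ii).} Let $J\colon L^p(\Rdb_+)\hookrightarrow L^p(\Rdb)$ be extension by zero, $Q\colon L^p(\Rdb)\to L^p(\Rdb_+)$ restriction, and $\kappa f(x):=f(-x)$. By Lemma \ref{Tensor-Ext} the corner map $q(T):=Q\kappa TJ$ is $p$-completely contractive. A direct computation gives $q(\tau_{-u})=\theta_u$ for $u>0$ and, using that Fourier multipliers commute with translations, $q(\M_p(\Rdb))\subseteq Hank_p(\Rdb_+)$. Hence $w:=T_m\circ q|_{\M_p(\Rdb)}\colon\M_p(\Rdb)\to B(L^p(\Rdb_+))$ is $p$-cb of norm $\leq C$, and Theorem \ref{PW} provides an $SQ_p$-space $E$, a unital $p$-completely contractive homomorphism $\pi\colon\M_p(\Rdb)\to B(E)$ and contractions $V\colon L^p(\Rdb_+)\to E$, $W\colon E\to L^p(\Rdb_+)$ with $\|V\|\|W\|\leq C$ and $w=W\pi(\cdot)V$. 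Fix $\epsilon>0$ small; for $s\geq\epsilon,t\geq 0$, set $f_s:=\epsilon^{-1/p}\chi_{[s-\epsilon,s]}$ and $g_t:=\epsilon^{-1/p'}\chi_{[t,t+\epsilon]}$, which yield $\langle\theta_{s+t}f_s,g_t\rangle=1$. Using $\tau_{-(s+t)}=\tau_{-t}\tau_{-s}$ and multiplicativity of $\pi$,
\[
m(s+t) = \langle W\pi(\tau_{-t})\pi(\tau_{-s})Vf_s,g_t\rangle = \langle\pi(\tau_{-s})Vf_s,\pi(\tau_{-t})^*W^*g_t\rangle_{E,E^*},
\]
so $\alpha(s):=\pi(\tau_{-s})Vf_s$ and $\beta(t):=\pi(\tau_{-t})^*W^*g_t$ are uniformly bounded with $\|\alpha\|_\infty\|\beta\|_\infty\leq C$. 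Finally, Remark \ref{Duality-SQp} together with (\ref{Lifting}) lifts $\alpha,\beta$ to essentially bounded $L^p(\Omega)$- and $L^{p'}(\Omega)$-valued functions realising (ii).

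\textbf{Main obstacle.} The delicate step is promoting the pointwise identity from the region $\{s\geq\epsilon\}$ to an a.e. identity on $\Rdb_+^{*2}$ by \emph{measurable} $\alpha,\beta$. To handle $s\in(0,\epsilon)$ I would exploit the dilation equivariance $D_\lambda\theta_u D_\lambda^{-1}=\theta_{\lambda u}$, under which $T_m$ being $p$-cb is equivalent to $T_{m(\lambda\cdot)}$ being $p$-cb with the same norm; running the construction at every dilation scale and patching via the continuity of $m$ (Lemma \ref{Continuous}) covers $\Rdb_+^{*2}$ up to a null set. Measurability of the $L^p(\Omega)$-valued lift rests on norm-continuity of $s\mapsto f_s$ in $L^p(\Rdb_+)$, Pettis measurability of $s\mapsto\pi(\tau_{-s})x$ for each $x\in E$, and a standard Borel selection in the quotient $E=E_1/E_2$.
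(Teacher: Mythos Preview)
Your outline follows the paper's architecture closely, but there is a genuine gap in the direction $(i)\Rightarrow(ii)$ at exactly the point you flag as the ``main obstacle''. You propose $\alpha(s)=\pi(\tau_{-s})Vf_s$ and then assert ``Pettis measurability of $s\mapsto\pi(\tau_{-s})x$ for each $x\in E$''. This is not justified: the homomorphism $\pi\colon\M_p(\Rdb)\to B(E)$ produced by Theorem~\ref{PW} is a purely algebraic object with no continuity or measurability whatsoever, and there is no reason for $s\mapsto\pi(\tau_{-s})$ to be measurable in any sense. The paper resolves this by a detour you have not anticipated: it composes $\pi$ with the convolution map $c\colon L^1(\Rdb)\to\M_p(\Rdb)$ to obtain a non-degenerate contractive homomorphism $\lambda=\pi\circ c\colon L^1(\Rdb)\to B(E)$, and then invokes a result of de~Pagter--Ricker to obtain a genuinely \emph{strongly continuous} representation $\sigma\colon\Rdb\to B(E)$ with $\lambda(g)=\int g(t)\sigma(t)\,dt$. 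One then proves (via an approximate-identity argument) that $W\sigma(-s)V=\Gamma(\tau_{-s})$, so the paper's $\alpha_\epsilon(s)=\sigma(-s-\tfrac{\epsilon}{2})V(\tau_s f)$ is continuous and hence belongs to $L^\infty(\Rdb_+;E)$. Note that $\sigma(-s)$ need not coincide with $\pi(\tau_{-s})$; only the compressions $W\sigma(-s)V$ and $W\pi(\tau_{-s})V$ agree.

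A second, smaller issue: your dilation-patching idea for extending from $\{s\geq\epsilon\}$ to all of $\Rdb_+^{*2}$ produces a different pair $(\alpha_\lambda,\beta_\lambda)$ at each scale $\lambda$, and gluing these into a single measurable $(\alpha,\beta)$ is not straightforward. The paper's route is cleaner: it obtains the factorisation for $m_\epsilon=m(\cdot+\epsilon)$ on all of $\Rdb_+^{*2}$, converts this via Theorem~\ref{Herz} into a Schur-multiplier bound on $B(\ell^p_{\Rdb_+^*})$, lets $\epsilon\to 0$ pointwise (legitimate because Schur bounds are tested on finite matrices and $m$ is continuous), and then applies Theorem~\ref{Herz} in the reverse direction to $m$ itself.

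Finally, for $(ii)\Rightarrow(i)$ your sketch is too loose: a Schur multiplier on $B(\ell^p_{\Rdb_+^*})$ does not obviously act on $B(L^p(\Rdb_+))$, and general Hankel operators have no kernel $c(s+t)$. The paper instead builds the map explicitly as $w(T)=W\pi(T)V$ with $\pi(T)=T\overline{\otimes}I_{L^p(\Omega)}$, $V(f)=f\alpha$, $W^*(g)=g\beta$, checks $w^*$-continuity directly, and verifies $w(\theta_u)=m(u)\theta_u$ by an integral computation together with the a.e.\ identity $m(s+t)=\langle\alpha(s),\beta(t)\rangle$.
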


\begin{proof}
By homogeneity, we may assume that 
$C=1$ throughout this proof.

Assume $(i)$. The continuity of $m$ follows from
Lemma \ref{Continuous}. Let $T_m:Hank_p(\mathbb{R}_+)\to Hank_p(\mathbb{R}_+)$ be 
the $p$-completely contractive multiplier associated 
with $m$. Let 
$\kappa\colon L^p(\mathbb{R})\to L^p(\mathbb{R})$ be defined by 
$(\kappa f)(t)=f(-t)$, for all $f\in L^p(\mathbb{R})$. 
Let $J\colon L^p(\mathbb{R}_+)\to L^p(\mathbb{R})$
be the canonical embedding
and let $Q\colon L^p(\mathbb{R})\to L^p(\mathbb{R}_+)$ be the canonical projection 
defined by $Qf=f_{\vert{\mathbb{R}_+}}$. 
Let $q\colon B(L^p(\mathbb{R}))\to B(L^p(\mathbb{R}_+))$ be given 
by $q(T)=Q\kappa TJ$, for all $T\in B(L^p(\mathbb{R}))$.
Applying the easy implication ``$(ii)\,\Rightarrow\,(i)$" 
of Theorem \ref{PW}
we obtain that $q$ is $p$-completely contractive.

Let $\M_p(\mathbb{R})\subseteq B(L^p(\mathbb{R}))$ 
denote the subalgebra of bounded Fourier multipliers. Let
us show that if  $T\in \M_p(\mathbb{R})$, 
then $q(T)\in Hank_p(\mathbb{R}_+)$. 
For any $s \in\mathbb{R}$, recall 
$\tau_s\in B(L^p(\mathbb{R}))$ given 
by $\tau_s(f)=f(\cdotp-s)$. 
Note that $\tau_s\in \M_p(\mathbb{R})$ 
and that $\M_p(\mathbb{R})=\overline{\rm Span}^{w^*}
\{\tau_s:s\in\mathbb{R}\}$.
For all $f\in L^p(\mathbb{R}_+)$,
we have 
\begin{align*}
q(\tau_s)f&=Q\tau(f(\cdotp-s))=Q(f(-(\cdotp +s)))=
\{t\in\mathbb{R}_+\mapsto f(-t-s)\}.
\end{align*}
Hence, if $s\geq0$, then $q(\tau_s)=0$ and if $s<0$, 
then $q(\tau_s)= \theta_{-s}$. It is plain
that $q$ is 
$w^*$-continuous. Since $Hank_p(\mathbb{R}_+)$
is $w^*$-closed, we deduce that
$q$ maps $\M_p(\mathbb{R})$ into
$Hank_p(\mathbb{R}_+)$.

Consider the mapping $q_0:=
q_{\vert{\mathcal M}_p(\mathbb{R})}
\colon \M_p(\mathbb{R})\to Hank_p(\mathbb{R}_+)$ and set
$\Gamma:= T_m \circ q_0\colon \M_p(\mathbb{R})\to B(L^p(\Rdb_+))$. 
It follows from above that 
\begin{equation}\label{Tm}
\Gamma(\tau_{-s})=m(s)\theta_s,\qquad s>0.
\end{equation}
Since $q$ is $p$-completely contractive,
$\Gamma$ is also  $p$-completely contractive. 
Applying Theorem \ref{PW} to $\Gamma$, we obtain
the existence of an $SQ_p$-space $E$, a 
unital $p$-completely contractive, non-degenerate homomorphism 
$\pi\colon \M_p(\mathbb{R})\to B(E)$ as well as operators 
$V\colon L^p(\mathbb{R}_+)\to E$ and $W\colon
E\to L^p(\mathbb{R}_+)$ 
such that $\|V\|\|W\|\leq1$ and for every 
$x\in \M_p(\mathbb{R})$, $\Gamma(x)=W\pi(x)V$.

Let $c\colon L^1(\mathbb{R})\to \M_p(\mathbb{R})$ 
be defined by $[c(g)](f)=g\ast f$, for all
$g\in L^1(\Rdb)$ and $f\in L^p(\Rdb)$. 
Let $\lambda:L^1(\mathbb{R})\to B(E)$ be given by 
$\lambda=\pi\circ c$. Then $\lambda$ is a 
contractive, non-degenerate homomorphism. 
By \cite[Remark 2.5]{DPR},
there exists $\sigma:\mathbb{R}\to B(E)$, a bounded strongly continuous representation such that for all 
$g\in L^1(\mathbb{R})$, $\lambda(g)=
\int_{\mathbb{R}} g(t)\sigma(t)\, dt\,$
(defined in the strong sense). 
Let us show that 
\begin{equation}\label{Gamma}
\Gamma(\tau_{-s})=W\sigma(-s)V,
\qquad s>0. 
\end{equation}

Let $\eta\in L^1(\mathbb{R})_+$ be such 
that $\int_{\mathbb{R}}\eta(t)\, dt=1$. For any $r>0$, 
let $\eta_r(t)=r\eta(rt)$. Since 
$\sigma:\mathbb{R}\to B(E)$ is strongly continuous, 
the function $t\mapsto \langle\sigma(t)x,x^*\rangle$
is continuous and we have 
\begin{equation}\label{eq2}
\int_{\mathbb{R}}\eta_r(-s-t)
\langle\sigma(t)x,x^*\rangle\, dt\,
\xrightarrow[r\to\infty]{}\langle\sigma(-s)x,x^*\rangle,
\end{equation}
for all $x\in E$ and $x^*\in E^*$. Since the left-hand side
in \eqref{eq2} is equal to 
$\langle\pi\left(c\left(\eta_r(-s-\,\cdotp)\right)\right)x,x^*\rangle$,
we obtain, by Lemma \ref{approx}, that 
$\pi\left(c\left(\eta_r(-s-\,\cdotp)\right)\right)
\to\sigma(-s)$ in the $w^*$-topology of $B(E)$.
This implies that $W\pi\left(c\left(\eta_r(-s-\,\cdotp)\right)
\right)V\to W\sigma(-s)V$  in the $w^*$-topology 
of $B(L^p(\mathbb{R}_+))$. 
We next show that $W\pi\left(c\left(\eta_r(-s-\,\cdotp)
\right)\right)V\to \Gamma (\tau_{-s})$ in 
the $w^*$-topology of $B(L^p(\mathbb{R}_+))$,
which will complete the proof of (\ref{Gamma}). 
Since 
$$
W\pi(c\left(\eta_r(-s-\,\cdotp)\right))V=
\Gamma\left(c(\eta_r(-s-\,\cdotp))\right)
$$
and $\Gamma$ is $w^*$-continuous, 
it suffices to show that $c\left(\eta_r(-s-\,\cdotp)\right)\to\tau_{-s}$ 
in the $w^*$-topology of $B(L^p(\mathbb{R}))$. 
To see this, let $f\in L^p(\mathbb{R})$ and 
$g\in L^{p'}(\mathbb{R})$. We have that
\begin{align*}
\langle c\left(\eta_r(-s-\,\cdotp)\right)f,g\rangle&=
\langle\eta_r(-s-\,\cdotp)\ast f,g\rangle\\
&=\langle\delta_{-s}\ast\eta_r\ast f,g\rangle\\
& \to
\langle\delta_{-s}\ast f,g\rangle=\langle\tau_{-s}f,g\rangle.
\end{align*}
By Lemma \ref{approx} again, this proves
that $c\left(\eta_r(-s-\,\cdotp)\right)\to\tau_{-s}$ in the
$w^*$-topology, as expected.

Given any $\epsilon>0$, let 
$m_\epsilon:\mathbb{R}_+^*\to\mathbb{C}$ be defined by 
$$
m_\epsilon(t)=m(t+\epsilon),\qquad t>0.
$$
Let $f\in L^p(\mathbb{R}_+)$ be given by  
$f=\epsilon^{-\frac{1}{p}}\chi_{(0,\epsilon)}$ and let $g\in L^{p'}
(\mathbb{R}_+)$ be given by 
$g=\epsilon^{-\frac{1}{p'}}\chi_{(0,\epsilon)}$. 
For any $s,t>0$, set 
$$
\alpha_\epsilon(s):=\sigma(-s-\tfrac{\epsilon}{2})
V(\tau_s f)\quad \text{and}\quad 
\beta_\epsilon(t):=\sigma(-t-\tfrac{\epsilon}{2})^*
W^*(\tau_t g).
$$
Since $\sigma$ is strongly continuous,
$\alpha_\epsilon$ and $\beta_\epsilon$
are continuous. 
By (\ref{Tm}) and (\ref{Gamma}), we have that
 \begin{align*}
\langle\alpha_\epsilon(s),\beta_\epsilon(t)\rangle_{E,E^*}
&=\langle\sigma(-s-\tfrac{\epsilon}{2})V(\tau_sf),
\sigma(-t-\tfrac{\epsilon}{2})^*W^*(\tau_tg)\rangle\\
&=\langle W\sigma(-s-t-\epsilon)V(\tau_s f),\tau_t g\rangle\\
&=\langle\left(\Gamma(\tau_{-s-t-\epsilon})\right)
(\tau_s f),\tau_t g\rangle\\
&=m(s+t+\epsilon)\langle\theta_{s+t+\epsilon}(\tau_sf),\tau_t g\rangle\\
&=m_\epsilon(s+t)\langle\epsilon^{-1/p}\chi_{(t,t+\epsilon)},\epsilon^{-1/p'}\chi_{(t,t+\epsilon)}\rangle\\ 
&=m_\epsilon(s+t),
\end{align*}
for all $s,t>0$.
Moreover, $\norm{\alpha_\epsilon(s)}\leq \norm{V}$ and
$\norm{\beta_\epsilon(t)}\leq \norm{W}$  for all $t,s>0$. Since
$\alpha_\epsilon$ and $\beta_\epsilon$ are continuous, 
this implies that
$\alpha_\epsilon\in L^\infty(\mathbb{R}_+;E)$, 
$\beta_\epsilon\in L^\infty(\mathbb{R}_+;E^*)$ and 
$\|\alpha_\epsilon\|_\infty\|\beta_\epsilon\|_\infty\leq\|V\|\|W\|\leq1$.

We now show that the $SQ_p$-space $E$ can be replaced by 
an $L^p$-space in the above factorization
property of $m_\epsilon$. Following Remark \ref{Duality-SQp}, 
assume that $E=\frac{E_1}{E_2}$, with 
$E_2\subseteq E_1\subseteq L^p(\Omega)$, and for all $f\in E_1$, let 
$\dot{f}\in E$ denote the class of $f$. Recall (\ref{Dual-SQp}) 
and for all $g\in E_2^\perp$,
let $\dot{g}\in E^*$ denote the class of $g$.
Since
$E$ is a quotient of $E_1$, we have an isometric
embedding $E^*\subseteq E_1^*$. More precisely,
$$
E^*=\frac{E_2^\perp}{E_1^\perp}\hookrightarrow
\frac{L^{p'}(\Omega)}{E_1^\perp}= E_1^*.
$$
This induces an isometric
embedding $L^1(\Rdb_+;E^*) \subseteq L^1(\Rdb_+;
E_1^*)$. Since $E^*$ and $E_1^*$ are reflexive,
we may apply the identifications
$L^1(\Rdb_+;E^*)^*\simeq L^\infty(\Rdb_+;E)$ and
$L^1(\Rdb_+;E_1^*)^*\simeq L^\infty(\Rdb_+;E_1)$
provided by (\ref{DualBochner}). By the Hahn-Banach theorem,
we deduce the existence of
$\widetilde{\alpha_\epsilon}\in L^\infty(\Rdb_+;E_1)$
such that $\norm{\widetilde{\alpha_\epsilon}}_\infty=
\norm{\alpha_\epsilon}_\infty$
and the functional $L^1(\Rdb_+;E_1^*)\to \Cdb$
induced by $\widetilde{\alpha_\epsilon}$ extends the functional
$L^1(\Rdb_+;E^*)\to \Cdb$
induced by $\alpha_\epsilon$. It is easy to check that the latter
means that $\dot{\widetilde{\alpha_\epsilon}(s)}
=\alpha_\epsilon(s)$ almost everywhere on $\Rdb_+$.
Likewise, there exist
$\widetilde{\beta_\epsilon}\in L^\infty(\Rdb_+;E_2^\perp)$
such that $\norm{\widetilde{\beta_\epsilon}}_\infty=
\norm{\beta_\epsilon}_\infty$
and $\dot{\widetilde{\beta_\epsilon}(t)}
=\beta_\epsilon(t)$ almost everywhere on $\Rdb_+$.
Regard $\widetilde{\alpha}_\epsilon$ as an element of
$L^\infty(\mathbb{R}_+,L^p(\Omega))$ and 
$\widetilde{\beta}_\epsilon$ as an 
element of $L^\infty(\mathbb{R}_+,L^{p'}(\Omega))$. 
By (\ref{Lifting}), we then have 
\begin{equation*}
\langle\alpha_{\epsilon}(s),\beta_{\epsilon}(t)\rangle_{E,E^*}
=\langle\widetilde{\alpha}_{\epsilon}(s),
\widetilde{\beta}_{\epsilon}(t)\rangle_{L^p,L^{p'}},
\end{equation*}
for almost every $(s,t)\in\Rdb_+^{*2}$.

We therefore obtain that $m_\epsilon:\mathbb{R}_+^*\to\mathbb{C}$ 
satisfies condition $(ii)$ of the theorem (with $C=1$).

Define $\varphi\colon\Rdb_+^{*2}\to\Cdb$ by $\varphi(s,t)=m(s+t)$. 
Likewise, for any $\epsilon>0$, define  
$\varphi_\epsilon\colon\Rdb_+^{*2}\to\Cdb$ by 
$\varphi(s,t)=m_\epsilon(s+t)$.
Since $m$ is continuous,
the functions $\varphi$ and $\varphi_\epsilon$ are continuous.
It follows from above that for all $\epsilon>0$, 
$\varphi_\epsilon$ satisfies condition $(ii)$ in 
Theorem \ref{Herz}, with $C=1$.
The latter theorem therefore implies that 
the family $\{\varphi_\epsilon(s,t)\}_{(s,t)\in{\mathbb R}_+^{*2}}$
is a bounded Schur multiplier on $B(\ell^p_{{\mathbb R}_+^{*}})$,
with norm less than one. Thus
for all $[a_{ij}]_{1\leq i,j\leq n}$
in $M_n$ and for all
$t_1,\ldots,t_n, s_1,\ldots,s_n$ in ${\mathbb R}_+^{*}$,
we have $\norm{[\varphi_\epsilon(s_i,t_j)a_{ij}]}_{B(\ell^p_n)}
\leq \norm{[a_{ij}]}_{B(\ell^p_n)}$.
Since $m$ is continuous, $\varphi_\epsilon\to \varphi$ pointwise
when $\epsilon\to 0$. We deduce that $\varphi$ satisfies
(\ref{Schur}) with $C=1$ for all $[a_{ij}]_{1\leq i,j\leq n}$
in $M_n$ and  all
$t_1,\ldots,t_n, s_1,\ldots,s_n$ in ${\mathbb R}_+^{*}$.
Consequently, the family $\{\varphi(s,t)\}_{(s,t)
\in{\mathbb R}_+^{*2}}$
is a bounded Schur multiplier on $B(\ell^p_{{\mathbb R}_+^{*}})$,
with norm less than one. Applying the 
implication ``$(i) \Rightarrow (ii)$" in Theorem \ref{Herz},
we deduce the assertion
$(ii)$ of Theorem \ref{last}.

 Conversely, assume $(ii)$. Following Lemma \ref{Tensor-Ext}, let 
 $\pi\colon B(L^p(\Rdb_+))\to B(L^p(\Rdb_+\times\Omega))$ be 
 the $p$-completely isometric homomorphism defined by
 $\pi(T)= T\overline{\otimes} I_{L^p(\Omega)}$. This map is
 $w^*$-continuous. Indeed, let $(T_\iota)_\iota$ be a bounded
 net of $B(L^p(\Rdb_+))$ converging to some
 $T\in B(L^p(\Rdb_+))$ in the $w^*$-topology. For any
 $f\in L^p(\Rdb_+)$, $g\in L^{p'}(\Rdb_+)$,
 $\varphi\in L^p(\Omega)$ and $\psi\in L^{p'}(\Omega)$,
 we have
 $$
 \bigl\langle \pi(T_\iota), (f\otimes\varphi)\otimes 
 (g\otimes\psi)\bigr\rangle = 
 \langle T_\iota f,g\rangle_{L^p({\mathbb R}_+), L^{p'}({\mathbb R}_+)}
 \langle \varphi, \psi\rangle_{L^p(\Omega), L^{p'}(\Omega)},
 $$
where the duality pairing in the left hand-side refers to the 
identification 
$$
\bigl(L^p(\Rdb_+\times\Omega)\widehat{\otimes}
L^{p'}(\Rdb_+\times\Omega)\bigr)^*
\simeq B((L^p(\Rdb_+\times\Omega)).
$$
Since $\langle T_\iota f,g\rangle\to \langle  Tf,g\rangle$,
we deduce that $\langle \pi(T_\iota), (f\otimes\varphi)\otimes 
(g\otimes\psi)\rangle 
\to \langle \pi(T), (f\otimes\varphi)\otimes 
 (g\otimes\psi)\rangle$. Since $L^p(\Rdb_+)\otimes L^p(\Omega)$
 and  $L^{p'}(\Rdb_+)\otimes L^{p'}(\Omega)$ are dense
 in $L^p(\Rdb_+\times\Omega)$ and $L^{p'}(\Rdb_+\times\Omega)$,
 respectively, we deduce that $\pi(T_\iota)\to \pi(T)$
 in the $w^*$-topology, by Lemma \ref{approx}.
 This proves that $\pi$ is
 $w^*$-continuous.

Let $V\colon L^p(\Rdb_+)\to L^p(\Rdb_+;L^p(\Omega))
\simeq L^p(\Rdb_+\times\Omega)$ be defined by 
$$
V(f)= f\alpha,\qquad f\in L^p(\Rdb_+).
$$
This is a well-defined contraction. Likewise we define
a contraction $W\colon L^{p}(\Rdb_+\times\Omega)\to L^{p}(\Rdb_+)$
by setting
$$
W^*(g)= g\beta,\qquad g\in L^{p'}(\Rdb_+).
$$
It follows from above and from the implication ``$(ii)\Rightarrow 
(i)$" of Theorem \ref{PW} that the mapping
$$
w\colon B(L^p(\Rdb_+))
\longrightarrow B(L^p(\Rdb_+)),
\qquad w(T) = W\pi(T)V,
$$
is a $w^*$-continuous $p$-complete contraction.

We claim that for all $u>0$, we have
\begin{equation}\label{wu}
w(\theta_u)=m(u)\theta_u.
\end{equation}
To prove this, consider $f\in L^p(\Rdb_+)$ and 
$g\in L^{p'}(\Rdb_+)$. For all $u>0$, we have
$$
\langle w(\theta_u)f,g\rangle =
\langle \pi(\theta_u)V(f),W^*(g)\rangle=
\langle \pi(\theta_u)(f\alpha),(g\beta)\rangle.
$$
By the definitions of $\pi$ and 
$\theta_u$, we have
$\pi(\theta_u)(f\alpha)=(f\alpha)(u-\,\cdotp)$. Consequently,
$$
\langle w(\theta_u)f,g\rangle =
\int_{0}^{u} f(u-t)g(t)\langle\alpha(u-t),\beta(t)\rangle\, dt,
\qquad u>0.
$$
Let $h\in L^1(\Rdb_+)$ be an auxiliary function. Then
using Fubini's theorem and setting $s=u-t$ in due place, 
we obtain that
\begin{align*}
\int_{0}^\infty 
\langle w(\theta_u)f,g\rangle
h(u)\, du\, 
&= \int_{0}^\infty 
\int_{t}^\infty h(u) 
f(u-t)g(t)\langle\alpha(u-t),\beta(t)\rangle\, dudt\\
&= \int_{0}^\infty 
\int_{0}^\infty h(s+t) 
f(s)g(t)\langle\alpha(s),\beta(t)\rangle\, dsdt.
\end{align*}
Applying the a.e. equality $m(s+t)=
\langle\alpha(s),\beta(t)\rangle$
and reversing this computation, we deduce that
$$
\int_{0}^\infty 
\langle w(\theta_u)f,g\rangle
h(u)\, du\, =\int_{0}^\infty 
m(u) (f\ast g)(u) 
h(u)\, du.
$$
Since $h$ is arbitrary, this implies that 
$\langle w(\theta_u)f,g\rangle=m(u)(f\ast g)(u)$ 
for a.e. $u>0$. Equivalently,
$\langle w(\theta_u)f,g\rangle=m(u)\langle \theta_u f,g\rangle$ 
for a.e. $u>0$.
It is plain that $u\mapsto \theta_u$
is $w^*$-continuous on $B(L^p(\Rdb_+))$.
Since $w$ is $w^*$-continuous, the function
$u\mapsto \langle w(\theta_u)f,g\rangle$
is continuous as well. Since $m$ is assumed continuous, 
we deduce that $\langle w(\theta_u)f,g\rangle=
m(u)\langle \theta_u f,g\rangle$
for all $u>0$. This yields (\ref{wu}),
for all $u>0$.

By part (1) of Theorem \ref{dual space} and the 
$w^*$-continuity of 
$w$, the identity (\ref{wu}) implies that $Hank_p(\Rdb_+)$ 
is an invariant subspace of $w$. Further the restriction of
$w$ to $Hank_p(\Rdb_+)$ is the multiplier associated to $m$. The
assertion $(i)$ follows.
%
%
%
%
\end{proof}

\begin{rem}\label{p=2} 
We proved in \cite[Theorem 3.1]{ALZ}
that a continuous function $m\colon\Rdb_+^*\to\Cdb$ 
is the symbol of an 
$S^1$-bounded Fourier multiplier on $H^1(\Rdb)$, with
$S^1$-bounded norm $\leq C$, if and only if there 
exist a Hilbert space $\mathcal{H}$ and  two
functions $\alpha,\beta \in L^\infty(\mathbb{R}_+;\mathcal{H})$ 
such that $\|\alpha\|_\infty\|\beta\|_\infty\leq C$ and  
$m(s+t)=\langle\alpha(t),\beta(s)\rangle_{\mathcal H}$
for almost every $(s,t)\in\mathbb{R}_{+}^{*2}$. 
It turns out that using (\ref{H1}), 
a mapping $S\colon H^1(\Rdb)\to H^1(\Rdb)$ is an 
$S^1$-bounded Fourier multiplier with
$S^1$-bounded norm $\leq C$ if and only
if $S^*\colon Hank_2(\Rdb_+)\to Hank_2(\Rdb_+)$
is a completely bounded multiplier with
completely bounded norm $\leq C$. See
\cite[Remark 3.4]{ALZ} for more on this.
Thus the statement in \cite[Theorem 3.1]{ALZ}
is equivalent to the
case $p=2$ of Theorem \ref{last}. In this regard,
Theorem \ref{last} can be regarded
as a $p$-analogue of \cite[Theorem 3.1]{ALZ}.
\end{rem}

\begin{rem}
Let $f\in L^p(\Rdb_+)$ and $g\in L^{p'}(\Rdb_+)$. For any
$s,t>0$, we may write
$$
(f\ast g)(s+t)=\int_{\mathbb R} f(s+r)g(t-r)\, dr.
$$
Equivalently,
$$
(f\ast g)(s+t)=
\langle\tau_{-s}f,\tau_t\check{g}
\rangle_{L^p({\mathbb R}_+),L^{p'}({\mathbb R}_+)}.
$$
According to the implication ``$(ii)\Rightarrow (i)$"
of Theorem \ref{last} and Remark \ref{pre-dual}, $f\ast g$ is therefore a pointwise multiplier of $A_p(\Rdb_+)$, with
norm less than or equal to $\norm{f}_p\norm{g}_{p'}$.
We deduce that every $F\in A_p(\Rdb_+)$ is 
a pointwise multiplier of $A_p(\Rdb_+)$, with
norm less than or equal to $\norm{F}_{A_p}$.
This means that $A_p(\mathbb{R}_+)$ is a Banach algebra
for the pointwise product.
\end{rem}

\noindent
{\bf Acknowledgement.}
CL was supported by the ANR project 
{\it Noncommutative analysis on groups and quantum groups} 
(No./ANR-19-CE40-0002). SZ was supported by Projet I-SITE 
MultiStructure {\it Harmonic Analysis of noncommutative Fourier 
 and Schur multipliers over operator spaces}.
The authors
gratefully thank the Heilbronn Institute for Mathematical Research 
and the UKRI/EPSRC Additional Funding Program for Mathematical Sciences 
for the financial support through Heilbronn Grant R102688-101.

\bigskip


\begin{thebibliography}{99}
\bibitem{ALZ} L. Arnold, C. Le Merdy 
and S. Zadeh,
$S^1$-bounded Fourier multipliers on $H^1(\Rdb)$ 
and functional calculus for semigroups, Preprint 2022, arXiv:2203.16829.
\bibitem{Coine} C. Coine, Schur multipliers on 
$B(L^p,L^q)$, {\it J. Operator Theory} 79 (2018), no. 2, 301–326.
\bibitem{Daws} M. Daws, $p$-operator spaces and Figà-Talamanca–Herz algebras,
{\it J. Operator Theory} 63 (2010), no. 1, 47–83. 
\bibitem{DPR} B. de Pagter and W.J.
Ricker, $R$-bounded representations of $L^1(G)$,
{\it Positivity} 12 (2008), 151-166.
\bibitem{DU} J. Diestel and J. Uhl, Vector measures,
{\it Mathematical Surveys and Monographs}  15, American Mathematical Society, Providence, R.I., 1977, xiii+322 pp.
\bibitem{Der} A. Derighetti,  Convolution operators on groups,
{\it Lecture Notes of the Unione Matematica Italiana} 11, 
Springer, Heidelberg, 2011, xii+171 pp. 
\bibitem{Herz} C. Herz, 
Une g\'en\'eralisation de la notion de transform\'ee 
de Fourier-Stieltjes,
{\it Ann. Inst. Fourier} 24 (1974), no. 3, 145–157. 
\bibitem{Le Merdy} C. Le Merdy,
Factorization of $p$-completely bounded multilinear maps,
{\it Pacific J. Math} 172 (1996), no. 1, 187–213.
\bibitem{Ni0} N. K. Nikolski, Operators, functions, and systems: an easy reading, Vol. 1,
{\it Mathematical Surveys and Monographs} 92, American Mathematical Society, Providence, RI, 2002. xiv+461 pp.
\bibitem{Ni} N.K. Nikolski, Toeplitz matrices and operators,
{\it Cambridge Studies in Advanced Mathematics} 182. Cambridge University Press, Cambridge, 2020. xxii+430 pp.
\bibitem{Paulsen} V.I. Paulsen, Completely bounded maps and operator algebras,
{\it Cambridge Studies in Advanced Mathematics} 78, Cambridge University Press, Cambridge, 2002. xii+300 pp.
\bibitem{Peller} V.V. Peller, Hankel operators and their applications,
{\it Springer Monographs in Mathematics},
Springer-Verlag, New York, 2003. xvi+784 pp.
\bibitem{Pisier90} G. Pisier,
Completely bounded maps between sets of Banach space operators,
{\it  Indiana Univ. Math. J.} 39 (1990), no. 1, 249–277.
\bibitem{Pis} G. Pisier,
Similarity problems and completely bounded maps,
{\it   Lecture Notes in Mathematics} 1618, 
Springer-Verlag, Berlin, 2001.
\bibitem{Po} S.C. Power, Hankel operators on Hilbert space,
{\it Research Notes in Mathematics} 64, Pitman, Boston, 
Mass.-London, 1982. vii+87 pp.
\bibitem{Y0}  D. Yafaev,
Quasi-diagonalization of Hankel operators,
{\it J. Anal. Math.} 133 (2017), 133–182. 
\bibitem{Y1} D. Yafaev, On finite rank Hankel operators, 
{\it J. Funct. Anal.} 268 (2015), no. 7, 1808–1839. 
\bibitem{Y2} D. Yafaev, Hankel and Toeplitz operators: continuous and discrete representations,
{\it Opuscula Math.} 37 (2017), no. 1, 189–218.
\end{thebibliography}
\end{document}